\documentclass{amsart}

\newtheorem{theorem}{Theorem}[section]
\newtheorem{lemma}[theorem]{Lemma}
\newtheorem{proposition}[theorem]{Proposition}
\newtheorem{corollary}[theorem]{Corollary}

\theoremstyle{definition}
\newtheorem{definition}[theorem]{Definition}
\newtheorem{example}[theorem]{Example}

%\theoremstyle{remark}

%========================================================

\usepackage{amscd,amssymb}
%=====================================================

\begin{document}

\title[$S$-Prime Right Submodules and an $S$-Version of Prime Avoidance ]
 {$S$-prime right submodules and an $S$-version of prime avoidance}
\author[Alaa Abouhalaka]{Alaa Abouhalaka}

\address{Department of Mathematics,
\c{C}ukurova University, 01330 Balcal\i,
 Adana, Turkey}
\email{alaa1aclids@gmail.com}

\thanks
{}

%----------Author 2
\
%----------classification, keywords, date

%
\subjclass{16N60,16W99, 16D99, 16D25}
\keywords{Prime ideals, $S$-prime ideals, prime submodules, $S$-prime submodules, right $S$-Noetherian rings, noncommutative rings.}
\date{\today}
%----------additions
\dedicatory{Last Revised:\\ \today}
%%% ----------------------------------------------------------------------
\begin{abstract}
Let $S$ be an $m$-system of a ring $R$, and $P$ a submodule of a right $R$-module $M$. This paper, presents the notion of  $S$-prime submodule  and provides some properties and equivalent definitions.  We define $S$-multiplication right module, and prove that in multiplication ($S$-multiplication) right $R$-module $M$, the ideal $(P :_R M)$ is a right $S$-prime ideal of $R$ if and only if $P$ is an $S$-prime submodule of $M$. Moreover,  we give an $S$-version of prime avoidance lemma. Furthermore, we  define $S$-finite and $S$-Noetherian  right modules  following the definitions in \cite{A}. We prove  that a multiplication finitely generated right $R$-module $M$ is  $S$-Noetherian if  $(N:_RM)$  is an $S$-prime ideal of $R$, for all submodules $N$ of $M$. In addition, we give  some examples of right $S$-Noetherian rings. 
\end{abstract}
\label{page:firstblob}
%%% ----------------------------------------------------------------------
\maketitle
%%% ----------------------------------------------------------------------

\section{Introduction} Let $S$ be a multiplicatively closed set of a commutative ring $R$.

Anderson and Dumitrescu, in \cite{DD}, introduced the notion of $S$-finite ideals and $S$-Noetherian rings. An ideal $I$ is called $S$-finite if there exists a finitely generated  ideal $J$ of $R$ such that $Is\subseteq J\subseteq I$ for some $s\in S$. If every ideal in $R$ is $S$-finite, then $R$ is termed an $S$-Noetherian ring. Hamed and Malek, in \cite{HK}, introduced the notion of $S$-prime ideals in commutative rings. An ideal $P$ of $R$, disjoint from $S$, is $S$-prime if there exists an element $s \in S$ such that for all $a, b \in R$, if $ab \in P$, then $sa \in P$ or $sb \in P$. They demonstrated that a commutative ring $R$ is $S$-Noetherian if and only if every $S$-prime ideal is $S$-finite. Several studies in commutative rings have generalized the concept of $S$-prime ideals, as evidenced by papers such as \cite{EM} and \cite{SV}. Valuable results in commutative $S$-Noetherian rings, including practical applications in $S$-principal ideal rings, can be found in works like \cite{D} and \cite{HOW}. Furthermore, the concept of $S$-prime submodules was introduced in \cite{TOK}, leading to subsequent studies and generalizations in papers such as \cite{DDD} and \cite{HTS}. Refer to \cite{TOK}, a  submodule $P$  of a $R$-module $M$ with $(P :_R M) \cap S=\phi$ is called  an $S$-prime submodule if for $a \in R$ and $m\in M$, whenever $ma\in P$ implies    either $ as\in(P :_R M)$ or $m s\in  P$ for some $s\in S$. An noteworthy finding is presented in \cite{DDD}, wherein an $S$-version of the prime avoidance lemma is provided specifically for the commutative case.

The extension of mathematical concepts to noncommutative cases has garnered significant interest among researchers. In particular, those topics related to prime
ideals, as examples \cite{AAA}, \cite{A3},  \cite{MS}, and \cite{ND}. Furthermore, in \cite{MR} we see generalization of Cohen and Kaplansky Theorem to noncommutative rings. 

Recently, several authors have initiated research on  right $S$-Noetherian rings. For instance, in reference like \cite{JGJ}, Hilbert basis Theorem for right $S$-Noetherian rings is given. Of particular note is the $S$-version of the Cohen’s Theorem in \cite{RT} and  $S$-version of the Eakin-Nagata-Eisenbud Theorem  in \cite{LEE}. In addition, the $S$-version of the Eakin-Nagata-Eisenbud Theorem and 
$S$-version of Cohen’s Theorem for $S$-principal right ideal ring are given in \cite{JB}. In addition to generalizations that include right modules as we see in \cite{J} a  study of $S$-injective right modules.

Motivated   by the fact that prime ideals are closely tied  to the $m$-system concept in noncommutative rings. The author, in \cite{A},  introduced (as a new generalization into noncommutative rings) the concept of right $S$-prime (right) ideals, $S$-finite (right) ideals, and right $S$-Noetherian rings by considering $S$ as an $m$-system. The author gave an $S$-version of Cohen’s Theorem as follows:

\begin{theorem}  Let $S$ be an $m$-system of a  ring $R$ with identity. If $I\subset RI$ for all right ideals $I$ of $R$, then the following are equivalent:

$(1)$ Every right $S$-prime right ideal of $R$ is $S$-finite.

$(2)$ Every prime right ideal (disjoint from $S$) of $R$ is $S$-finite.

$(3)$ $R$ is a right $S$-Noetherian ring. 
\end{theorem}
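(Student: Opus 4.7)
The plan is to mimic the classical Cohen's theorem proof, adapted to the $m$-system / right $S$-prime setting in noncommutative rings, with the cycle of implications $(3) \Rightarrow (1) \Rightarrow (2) \Rightarrow (3)$.

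The implication $(3) \Rightarrow (1)$ is immediate from the definition of right $S$-Noetherian, since every right ideal (in particular every right $S$-prime one) is then $S$-finite. For $(1) \Rightarrow (2)$, I would verify directly from the definitions in the paper that every prime right ideal disjoint from $S$ is right $S$-prime: given a prime right ideal $P$ with $P\cap S=\emptyset$, the witness $s\in S$ required by the $S$-prime definition can be taken to be any fixed element of $S$ (say an $s$ obtained from the $m$-system property applied to the identity, or $1$ itself if $1\in S$), reducing the $S$-prime condition to the ordinary prime condition. Hence $(1)$ forces such $P$ to be $S$-finite.

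The substantive implication is $(2) \Rightarrow (3)$. Suppose $R$ is not right $S$-Noetherian, and let $\mathcal{F}$ be the family of right ideals of $R$ that are not $S$-finite, partially ordered by inclusion. The first step is to verify Zorn's lemma: given a chain $\{I_\alpha\}\subseteq\mathcal{F}$, if $I=\bigcup_\alpha I_\alpha$ were $S$-finite, one would have $Is\subseteq J\subseteq I$ for a finitely generated right ideal $J$ and some $s\in S$; but then $J\subseteq I_\beta$ for some $\beta$ because $J$ is finitely generated, and for any $I_\alpha\supseteq I_\beta$ we would get $I_\alpha s\subseteq Is\subseteq J\subseteq I_\alpha$, contradicting $I_\alpha\in\mathcal{F}$. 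Thus $\mathcal{F}$ has a maximal element $P$.

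The heart of the proof is showing that $P$ is a prime right ideal disjoint from $S$; by $(2)$, $P$ would then be $S$-finite, contradicting $P\in\mathcal{F}$. Disjointness $P\cap S=\emptyset$ should follow from the hypothesis $I\subset RI$ together with the structure of $S$-finiteness, since an element of $P\cap S$ would supply an $S$-finite witness for $P$. For primality, suppose there exist $a,b\in R\setminus P$ with $aRb\subseteq P$. Then $P+aR$ and $P+bR$ strictly contain $P$ and so are $S$-finite by maximality: there exist $s_1,s_2\in S$ and finitely generated right ideals $J_1\subseteq P+aR$ and $J_2\subseteq P+bR$ with $(P+aR)s_1\subseteq J_1$ and $(P+bR)s_2\subseteq J_2$. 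The plan is to extract finite generating data from $J_1$ and $J_2$, use $aRb\subseteq P$ to push the $a$- and $b$-parts into $P$, and then invoke the $m$-system property on $s_1,s_2$ to produce a single $s\in S$ witnessing $S$-finiteness of $P$ itself.

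The main obstacle will be this last combining step. In the commutative case one can simply take $s=s_1s_2$ and the computation is routine, but here one must carefully track the sides on which elements are multiplied, use the $m$-system axiom to replace $s_1 r s_2$ by an element of $S$ for a suitable $r\in R$, and apply the hypothesis $I\subset RI$ to move between left and right actions when assembling the finitely generated right ideal inside $P$. Once this is carried out, $P$ is $S$-finite, contradicting $P\in\mathcal{F}$, so $\mathcal{F}=\emptyset$ and $R$ is right $S$-Noetherian.
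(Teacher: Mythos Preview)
The present paper does not actually prove this theorem: it is quoted in the introduction as a result from the author's earlier paper \cite{A} (``$S$-Prime Ideals, $S$-Noetherian Noncommutative Rings, and the $S$-Cohen's Theorem''), and no proof appears here. So there is no proof in this paper to compare your proposal against. That said, your overall strategy---the Cohen-type argument via a maximal non-$S$-finite right ideal---is the standard one and is almost certainly the route taken in \cite{A}.

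Two points in your sketch are genuinely incomplete rather than just terse. First, your justification that the maximal element $P$ satisfies $P\cap S=\emptyset$ is hand-waving: you assert that an $s\in P\cap S$ ``would supply an $S$-finite witness for $P$,'' but with the paper's definition $S$-finiteness requires $P\langle s\rangle\subseteq J\subseteq P$ for a finitely generated right ideal $J$, and since $P$ is only a right ideal the two-sided ideal $\langle s\rangle=RsR$ need not lie inside $P$; you have not produced any candidate $J$. This is exactly where the hypothesis ``$I\subset RI$'' and the precise definitions from \cite{A} must be invoked, and you should work this out rather than gesture at it. Second, in the primality step you consider $P+aR$ and $P+bR$ and plan to ``combine'' their $S$-finiteness data, but the classical Cohen argument does not proceed this way: the key object is the colon right ideal $(P:a)=\{r\in R:ar\in P\}$, which contains $P+bR$ and is therefore $S$-finite, and one shows $P$ is generated (up to $\langle s\rangle$) by the finite data from $P+aR$ together with $a\cdot(P:a)$. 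Your outline omits this residual-ideal step entirely, and without it the ``combining'' you describe has no mechanism for landing back inside $P$.
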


In this paper, we continue the work in \cite{A}, and present the concept of $S$-prime submodules of a right $R$-module  $M$ as generalization of the concept of $S$-prime submodules in commutative rings which were introduced in \cite{TOK}. In addition, we show some properties, and equivalent definitions of $S$-prime submodules, we prove that a  submodule $P$ of a right $R$-module $M$ with $(P :_R M) \cap S=\phi$ is $S$-prime if and only if $(P :_M \langle s\rangle)$ is a prime submodule, for some $s\in S$. In addition, we give an $S$-version of prime avoidance lemma by defining the concept of $S$-multiplication right module. 
We also define the concept of  $S$-Finite  and $S$-Noetherian right modules following the definitons in \cite{A}, and we prove that for  a multiplication finitely generated right $R$-module $M$, if $(N:_RM)$  is an $S$-prime ideal of $R$, then $M$ is an $S$-Noetherian right $R$-module. Moreover, we show some examples of right $S$-Noetherian ring.

Throughout the paper, we consider rings that are associative, and with unity. Furthermore, when referring  to an ideal, we specifically mean a proper two sided ideal in the context of our study. In addition, all the modules are right modules, and we use the following notations for an ideal $J$ of $R$, and a  submodule $P$ of a right $R$-module $M$:  

$(P :_R M)=\{ r\in R, Mr \subseteq P\}$ and 
$(P :_M J)=\{ m\in M, mJ \subseteq P\}$

\section{$S$-prime submodules}

The concept of prime submodules  was introduced by Dauns in \cite{JDP}. A submodule $P$ of  a right    $R$-module  $M$ is prime  if for $a \in R$ and $m\in M$ with $mRa\subseteq P$, implies    either $a\in(P :_R M)$ or $m \in  P$. In the following we present the definition of $S$-prime submodule. On the other hand, the concept of  right $S$-prime ideal was introduced by the author in \cite{A} as the following.  

\begin{definition}[ Definition 2.2 of \cite{A}] Let $P$ be an ideal of  a ring $R$, and $S$ be an $m$-system of $R$ such that $P\cap S=\phi$. We call $P$  a right $S$-prime ideal associated with an element $s\in S$ (briefly a right $S$-prime ideal), if  for the ideals $A$, $B$  of $R$ with $AB \subseteq P$, either $A\langle s\rangle \subseteq P$ or $B\langle s\rangle\subseteq P$.  

If the ring $R$ posses an identity, then, Proposition 2.7 of \cite{A}, shows that the ideal $P$ is a right $S$-prime ideal,   if and only if whenever $a, b\in R$ with $aRb\subseteq P$ implies $a\langle s\rangle\subseteq P$ or $b\langle s\rangle\subseteq P$ for some $s\in S$.
\end{definition}

In the following we give the definition of $S$-prime submodule. 

\begin{definition} Let $S \subseteq R$ be an $m$-system of a ring $R$, and $P$ a submodule of right $R$-module $M$ with $(P :_R M) \cap S=\phi$. Then, $P$ is called an $S$-prime submodule if for $a \in R$ and $m\in M$, whenever $mRa\subseteq P$, implies    either $a\langle s\rangle\subseteq(P :_R M)$ or $m\langle s\rangle \subseteq  P$ for some $s\in S$.
\end{definition}

\begin{example}\label{1} Every prime submodule P of  a right    $R$-module  $M$ , with $(P :_R M) \cap S=\phi$, is an S-prime submodule for each m-system S of R,  and the converse is not true in general. However, if  S consist of units of R, then,   every S-prime submodule is a prime submodule.
\end{example}

\begin{example} Let $R=\mathbb{Z}$,  $M=M_2(\mathbb{Z}_4)$, and $P=\bigg\{\left[ \begin{array}{cc}0& 0 \\ 0 & 0\end{array} \right]\bigg\}$. Then, $P$ is a submodule of the $R$-module $M$. Note that $\left[ \begin{array}{cc}2& 2 \\ 0 & 2\end{array} \right].R .2\subseteq P$, however, $2\not\in(P:_RM)=0$ and $\left[ \begin{array}{cc}2& 2 \\ 0 & 2\end{array} \right]\not\in P$, hence, P is not a prime submodule. On the other hand, consider $S=\{s, s^2, s^4, s^8,...\}$ where $s=2$, then $S$ is an $m$-system.  If $\left[ \begin{array}{cc}a_1& a_2 \\ a_3 & a_4\end{array} \right].R .n\subseteq P$, then for $s_1=4\in S$ we obtain $\left[ \begin{array}{cc}a_1& a_2 \\ a_3 & a_4\end{array} \right]\langle s_1\rangle \subseteq  P$, hence,  $P$ is an $S$-prime submodule (recall that $(P :_R M) \cap S=\phi$).
\end{example}

\begin{theorem}\label{EQ} Let $S \subseteq R$ be an $m$-system of a ring $R$, and $P$ a submodule of $R$-module $M$ with $(P :_R M) \cap S=\phi$. Then, $P$ is an $S$-prime submodule associated with $s\in S$ if and only if whenever $NJ \subseteq P$ implies  
\[
J\langle s\rangle \subseteq (P :_R M) \text{  or  }  N\langle s\rangle  \subseteq P,
\]
 for each ideal J of R and submodule N of M.
\end{theorem}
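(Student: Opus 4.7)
The plan is to prove the equivalence by passing between the element-wise definition, which quantifies over $a \in R$ and $m \in M$, and the ideal/submodule formulation, keeping the element $s$ fixed throughout both directions.

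For the forward direction ($\Rightarrow$), suppose $P$ is $S$-prime associated with $s$, and let $N$ be a submodule of $M$ and $J$ an ideal of $R$ with $NJ \subseteq P$. I would argue by contradiction: assume that $J\langle s\rangle \not\subseteq (P:_R M)$ and $N\langle s\rangle \not\subseteq P$, and pick witnesses $a \in J$ and $m \in N$ such that $a\langle s\rangle \not\subseteq (P:_R M)$ and $m\langle s\rangle \not\subseteq P$. Since $J$ is a two-sided ideal, $Ra \subseteq J$, and since $m \in N$, it follows that $mRa \subseteq N\cdot J \subseteq P$. Applying the $S$-prime hypothesis (with the fixed $s$) to this pair $(a,m)$ yields either $a\langle s\rangle \subseteq (P:_R M)$ or $m\langle s\rangle \subseteq P$, contradicting the choice of witnesses.

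For the backward direction ($\Leftarrow$), suppose the ideal/submodule condition holds with the same fixed $s$, and take $a \in R$, $m \in M$ with $mRa \subseteq P$. The key move is to choose $N := mR$, which is a submodule of $M$ (using the identity of $R$), and $J := RaR$, which is the two-sided ideal of $R$ generated by $a$. Since $mRa \subseteq P$ and $P$ is an $R$-submodule, we have $NJ = mR \cdot RaR \subseteq mRaR \subseteq P$. The hypothesis then gives either $J\langle s\rangle \subseteq (P:_R M)$, whence $a\langle s\rangle \subseteq (P:_R M)$ because $a \in J$, or $N\langle s\rangle \subseteq P$, whence $m\langle s\rangle \subseteq P$ because $m = m\cdot 1 \in N$.

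The only subtle point is in the backward direction: one must pick $N$ and $J$ small enough that $mRa \subseteq P$ really does force $NJ \subseteq P$, yet large enough to contain the original $m$ and $a$ so that the conclusion transfers back. The choices $N = mR$ and $J = RaR$ are precisely calibrated for this; once they are in hand, everything is a direct translation, and the equivalence holds without invoking any further structural properties of the $m$-system $S$ beyond the fixed witness $s$.
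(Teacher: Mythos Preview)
Your proof is correct and follows essentially the same approach as the paper's: for the forward direction the paper fixes a single witness $n\in N$ with $n\langle s\rangle\not\subseteq P$ and then ranges over all $a\in J$, while you pick two witnesses simultaneously and derive a contradiction, but this is only a cosmetic reorganization; for the backward direction both you and the paper take $N=mR$ and $J=RaR=\langle a\rangle$ and use $mR\cdot RaR\subseteq mRaR\subseteq P$.
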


\begin{proof}  Suppose $P$ is an $S$-prime submodule, and let $NJ \subseteq P$ for some ideal $J$ of $R$ and submodule $N$ of $M$. If $N\langle s\rangle  \not\subseteq P$, then there exists $n\in N$ such that $n\langle s\rangle  \not\subseteq P$. 
Now for all $a\in J$, $nRa \subseteq nRaR \subseteq NJ\subseteq P$, hence, by assumption,  $a\langle s\rangle\subseteq(P :_R M)$, consequently, $J\langle s\rangle \subseteq (P :_R M)$.  

Conversely, Suppose that  $mRa\subseteq P$ for some $a \in R$ and $m\in M$, then  $mRRaR\subseteq P$, hence, by assumption, either  $a\langle s\rangle \subseteq \langle a\rangle\langle s\rangle \subseteq (P :_R M)$ or $m\langle s\rangle \subseteq mR\langle s\rangle \subseteq P$. 
\end{proof}

We will denote the sets of all prime submodules and all $S$-prime submodules by $Spec(M_R)$ and $Spec_S (M_R)$, respectively. 

\begin{theorem}\label{MOD}
Let P be a submodule of $R$-module $M$, with $(P :_R M) \cap S=\phi$. Then, $P \in Spec_S (M_R)$ if and only if $(P :_M \langle s\rangle)\in Spec(M_R)$ for some $s\in S$.
\end{theorem}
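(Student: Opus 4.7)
The plan is to prove the two implications separately. The backward direction is formal; the forward direction combines Theorem~\ref{EQ} with the fact that $(P:_R M)$ is itself an $S$-prime ideal, closing the last gap via the $m$-system property of $S$.

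For $(\Leftarrow)$, assume $Q:=(P:_M\langle s\rangle)$ is prime for some $s\in S$. Since $P$ is a submodule, $P\langle s\rangle\subseteq P$, hence $P\subseteq Q$. Given $mRa\subseteq P\subseteq Q$, primeness of $Q$ yields either $m\in Q$, i.e.\ $m\langle s\rangle\subseteq P$, or $a\in(Q:_R M)$, which unpacks as $Ma\langle s\rangle\subseteq P$, equivalently $a\langle s\rangle\subseteq(P:_R M)$. Either conclusion witnesses $S$-primeness of $P$ associated with $s$; the hypothesis supplies $(P:_R M)\cap S=\phi$.

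For $(\Rightarrow)$, let $s\in S$ be the element witnessing that $P$ is $S$-prime and set $Q=(P:_M\langle s\rangle)$. Properness of $Q$ follows from $s\in\langle s\rangle\setminus(P:_R M)$: $Ms\not\subseteq P$, so $M\not\subseteq Q$. For primeness, suppose $mRa\subseteq Q$, equivalently $mRa\langle s\rangle\subseteq P$. Apply Theorem~\ref{EQ} with the submodule $N:=mR$ and the ideal $J:=\langle a\rangle\langle s\rangle$; since $NJ=mR\cdot RaR\cdot RsR=mRa\langle s\rangle\subseteq P$, the theorem delivers either $N\langle s\rangle=m\langle s\rangle\subseteq P$ (so $m\in Q$) or $J\langle s\rangle=\langle a\rangle\langle s\rangle^2\subseteq(P:_R M)$. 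In the second case I pass to the ideal side: a routine argument (pushing the element-wise $S$-prime condition through every $m\in M$) shows that $(P:_R M)$ is itself a right $S$-prime ideal associated with $s$, so applying the ideal form of the $S$-prime condition to the factorisation $\langle a\rangle\cdot\langle s\rangle^2\subseteq(P:_R M)$ yields either $\langle a\rangle\langle s\rangle\subseteq(P:_R M)$, which is precisely $a\in(Q:_R M)$, or $\langle s\rangle^3\subseteq(P:_R M)$.

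The main obstacle is the residual subcase $\langle s\rangle^3\subseteq(P:_R M)$, which I rule out by exhibiting an element of $S$ inside $\langle s\rangle^3$. Iterating the $m$-system condition gives $r_1,r_2\in R$ with $s_1:=sr_1s\in S$ and $s_2:=sr_2s_1=sr_2sr_1s\in S$; factorising $s_2=s\cdot(r_2sr_1)\cdot s$, and noting $r_2sr_1\in RsR=\langle s\rangle$ and $s\in\langle s\rangle$, exhibits $s_2\in\langle s\rangle\cdot\langle s\rangle\cdot\langle s\rangle=\langle s\rangle^3$. Hence $s_2\in S\cap(P:_R M)$, contradicting the disjointness hypothesis. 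This closes the forward direction with witness $s$.
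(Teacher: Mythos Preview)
Your proof is correct. The backward implication matches the paper's verbatim, and in the forward implication both you and the paper reduce to the case $\langle a\rangle\langle s\rangle^{2}\subseteq(P:_R M)$ via Theorem~\ref{EQ}. The difference is in how that residual case is closed. The paper stays on the module side: it picks $s_1=srs\in S$ so that $\langle a\rangle\langle s_1\rangle\subseteq(P:_R M)$, obtains $a\in((P:_M\langle s_1\rangle):_R M)$, and then proves the auxiliary inclusion $(P:_M\langle s_1\rangle)\subseteq(P:_M\langle s\rangle)$ by reapplying the submodule $S$-prime condition to $xRs_1\subseteq P$. You instead pass to the ideal side, using that $(P:_R M)$ is itself right $S$-prime with witness $s$ (this is exactly Proposition~\ref{M1}, which you reprove in one line), apply the ideal $S$-prime condition to the product $\langle a\rangle\cdot\langle s\rangle^{2}$, and kill the bad branch $\langle s\rangle^{3}\subseteq(P:_R M)$ by exhibiting $s_2=sr_2sr_1s\in S\cap\langle s\rangle^{3}$. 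Your route is a bit more streamlined since it avoids the separate containment lemma, at the cost of invoking the ideal-level result; the paper's route is more self-contained within the module setting. Both land on the same witness $s$ for primeness of $(P:_M\langle s\rangle)$.
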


\begin{proof}  Suppose $(P :_M \langle s\rangle)$ is a prime submodule, and let  $mRa\subseteq P$ for some $a \in R$ and $m\in M$. Then,  $mRa\subseteq (P :_M \langle s\rangle)$, hence, by assumption, either $m\in(P :_M \langle s\rangle)$ which implies $m\langle s\rangle\subseteq P$. Or $a\in((P :_M \langle s\rangle) :_R M)$, hence, $Ma\subseteq(P :_M \langle s\rangle)$, consequently, $Ma\langle s\rangle\subseteq P$, and thus, $a\langle s\rangle\subseteq (P :_R M)$.

Conversely, suppose that $P$ is an $S$-prime submodule, and let

 $mRa\subseteq (P :_M \langle s\rangle)$ for some $a \in R$ and $m\in M$. Then, $mR\langle a\rangle\langle s\rangle\subseteq P $, thus, either $mR\langle s\rangle\subseteq P $ which implies $m\in mR\subseteq (P :_M \langle s\rangle) $. 
 
 Or $\langle a\rangle\langle s\rangle\langle s\rangle\subseteq (P :_R M)$, hence,  $\langle a\rangle RsRsR\subseteq (P :_R M)$, since $S$ is an $m$-system then there exists $r\in R$ such that $s_1=srs\in S$ thus

  $\langle a\rangle\langle s_1\rangle\subseteq (P :_R M)$. Thus, $a\in \langle a\rangle\subseteq ((P :_M \langle s_1\rangle):_RM)$. 
  
  Now let $x\in   (P :_M \langle s_1\rangle)$, then $xRs_1\subseteq x\langle s_1\rangle\subseteq   P$, since $P$ is an $S$-prime submodule associated with $s\in S$, either  $s_1\langle s\rangle\subseteq (P :_R M)$ or $x\langle s\rangle\subseteq   P$. 
  
  If $s_1\langle s\rangle\subseteq (P :_R M)$, then since $S$ is an $m$-system then there exists $r_1\in R$ such that $s_2=s_1r_1s\in S$ thus $s_2\in (P :_R M)$, contradiction. Thus, $x\langle s\rangle\subseteq   P$ and hence $x\in  (P:_M\langle s\rangle)$ which implies $(P :_M \langle s_1\rangle)\subseteq(P:_M\langle s\rangle)$. Hence, $a\in \langle a\rangle\subseteq ((P :_M \langle s_1\rangle):_RM)\subseteq ((P :_M \langle s\rangle):_RM)$, and thus, $(P :_M \langle s\rangle)$ is a prime submodule. 
\end{proof}

\begin{proposition}\label{M1} Let M be a right R-module and S an m-system of R. If P is an S-prime submodule of M, then $(P :_R M)$ is a right S-prime ideal of R.
\end{proposition}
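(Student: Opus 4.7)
The plan is to reduce the statement to Theorem \ref{EQ}. We know $(P:_R M) \cap S = \phi$ is baked into the definition of an $S$-prime submodule, so the disjointness hypothesis for being a right $S$-prime ideal is immediate. What remains is the main $S$-prime implication for ideals of $R$.

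Let $s \in S$ be the element associated with $P$ as an $S$-prime submodule. Take two ideals $A, B$ of $R$ with $AB \subseteq (P :_R M)$. The trick is to promote the pair $(A,B)$ to a submodule-ideal pair by setting $N := MA$, which is a submodule of $M$. By the defining property of the colon ideal, $MAB \subseteq P$, i.e.\ $NB \subseteq P$. Now apply Theorem \ref{EQ} to the pair $(N, B)$: either $B\langle s\rangle \subseteq (P :_R M)$, in which case we are done, or $N\langle s\rangle = MA\langle s\rangle \subseteq P$. In the latter case, $MA\langle s\rangle \subseteq P$ translates back, via the definition of $(P :_R M)$, to $A\langle s\rangle \subseteq (P :_R M)$.

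Putting these two cases together, $AB \subseteq (P :_R M)$ forces $A\langle s\rangle \subseteq (P :_R M)$ or $B\langle s\rangle \subseteq (P :_R M)$. Combined with the disjointness from $S$ noted at the outset, this is exactly what it means for $(P :_R M)$ to be a right $S$-prime ideal of $R$ associated with the same $s$.

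I do not foresee a real obstacle here: the whole argument is a one-line reduction to Theorem \ref{EQ} via the submodule $MA$. The only place where one has to be slightly careful is keeping the $s \in S$ uniform between the submodule version and the ideal version, which is automatic because we reuse the $s$ that witnesses $P$ being $S$-prime.
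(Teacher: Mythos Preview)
Your proof is correct and follows essentially the same approach as the paper: both multiply through by $M$ to land in $P$, apply the $S$-prime hypothesis on $P$, and read the conclusion back as an $S$-prime condition on $(P:_R M)$. The only cosmetic difference is that you work at the level of ideals via Theorem \ref{EQ}, while the paper argues element-wise with $aRb$ and then invokes an equivalent characterization of right $S$-prime ideals from \cite{A}; your route is arguably the cleaner of the two.
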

\begin{proof} Let $aRb\subseteq(P :_R M)$ for $a, b \in R$, then $MaRb\subseteq P$. Thus, for all $m \in M$, we have  $(ma)Rb\subseteq P$, and since  $P$ is an $S$-prime submodule, we obtain either $ma\langle s\rangle\subseteq   P$ or $b\langle s\rangle\subseteq   (P :_R M)$. Assume that $b\langle s\rangle\not\subseteq  (P :_R M)$, then $ma\langle s\rangle\subseteq   P$ for all $m\in M$, hence,  $a\langle s\rangle\subseteq   (P :_R M)$. Thus, by $(4)$ of Theorem  2.11 of \cite{A}, $(P:_R M)$ is a right $S$-prime ideal of $R$.
\end{proof}

In refer to \cite{A4}, a  right $R$-module $M$ is called a multiplication module if 
$P =M(P :_R M)$, 
 for every submodule $P$ of $M$.

\begin{proposition}\label{M2} Let  P be a submodule of a  multiplication right  R-module M, and S an m-system of R. If $(P :_R M)$ is a right S-prime ideal of R, then P is an S-prime submodule of M.
\end{proposition}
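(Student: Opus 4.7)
The plan is to reduce the claim to the ideal-level characterization of Theorem \ref{EQ} and then exploit the multiplication hypothesis to pass between submodules of $M$ and two-sided ideals of $R$.

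First I would invoke Theorem \ref{EQ}: it suffices to show that whenever $NJ \subseteq P$ for a submodule $N$ of $M$ and an ideal $J$ of $R$, there exists $s\in S$ with either $N\langle s\rangle \subseteq P$ or $J\langle s\rangle \subseteq (P:_R M)$. So let $NJ\subseteq P$ be given. The first key observation is that $(N:_R M)$ is a two-sided ideal of $R$ (for $r\in(N:_R M)$ and $t\in R$, $M(tr)=(Mt)r\subseteq Mr\subseteq N$, while $Mrt\subseteq Nt\subseteq N$), so that the $S$-primeness hypothesis for ideals is applicable. Moreover, from $NJ\subseteq P$ one directly computes $(N:_R M)J\subseteq(P:_R M)$: if $r\in(N:_R M)$ and $j\in J$, then $Mrj\subseteq Nj\subseteq P$.

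Now I would apply the right $S$-prime ideal hypothesis to the containment $(N:_R M)\cdot J\subseteq(P:_R M)$, which yields an element $s\in S$ such that either $J\langle s\rangle\subseteq(P:_R M)$ or $(N:_R M)\langle s\rangle\subseteq(P:_R M)$. In the first case we are done immediately. In the second case I would use the multiplication hypothesis twice: $N = M(N:_R M)$ and $P = M(P:_R M)$. Multiplying both sides of $(N:_R M)\langle s\rangle\subseteq(P:_R M)$ on the left by $M$ gives
\[
N\langle s\rangle \;=\; M(N:_R M)\langle s\rangle \;\subseteq\; M(P:_R M) \;=\; P,
\]
which is exactly the alternative we wanted.

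I do not foresee a serious obstacle: the argument is essentially a translation between $M$ and $R$ via the isomorphism $N\leftrightarrow(N:_R M)$ furnished by multiplication. The only subtle point to double-check is that $(N:_R M)$ and $(P:_R M)$ really are two-sided ideals (so that the right $S$-prime property applies) and that the disjointness hypothesis $(P:_R M)\cap S=\varnothing$ holds — but this is part of the standing assumption that $(P:_R M)$ is a right $S$-prime ideal.
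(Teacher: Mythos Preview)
Your proposal is correct and follows essentially the same route as the paper's proof: reduce via Theorem~\ref{EQ} to $NJ\subseteq P$, pass to $(N:_R M)J\subseteq(P:_R M)$, apply the right $S$-prime hypothesis, and in the second alternative use $N=M(N:_R M)$ to conclude $N\langle s\rangle\subseteq P$. The only cosmetic difference is that in the final step you invoke $P=M(P:_R M)$, whereas the paper (and your argument, really) only needs the always-true containment $M(P:_R M)\subseteq P$.
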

\begin{proof}  Let $NJ \subseteq P$ for some ideal $J$ of $R$ and submodule $N$ of $M$. Since $M$ is multiplication module, then $N=M(N :_R M)$, hence, $M(N :_R M)J \subseteq P$, and so $(N :_R M)J \subseteq (P :_R M)$. By assumption, either   $J\langle s\rangle \subseteq (P :_R M)$ or  $(N :_R M)\langle s\rangle \subseteq (P :_R M)$, which implies either   $J\langle s\rangle \subseteq (P :_R M)$ or  $N\langle s\rangle=M(N :_R M)\langle s\rangle \subseteq P $. Thus, by Theorem \ref{EQ}, $P$ is an $S$-prime submodule of $M$.
\end{proof}

\begin{corollary}\label{SMM} Let  P be a submodule of a  multiplication right R-module M, and S an m-system of R.

$(1)$  The ideal $(P :_R M)$ is a right S-prime ideal of R if and only if P is an S-prime submodule of M.

$(2)$ If P is an S-prime submodule of  M, then $Ann(M)\subseteq I$ for some right S-prime ideal I of R.
\end{corollary}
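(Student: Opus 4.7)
The plan is to observe that both parts are immediate corollaries of the two preceding propositions, so the proof is largely a matter of assembly rather than fresh argument.

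For part (1), I would simply cite Proposition \ref{M1} and Proposition \ref{M2}. Proposition \ref{M1} gives the forward implication (and does not even require $M$ to be multiplication): if $P$ is $S$-prime in $M$ then $(P :_R M)$ is right $S$-prime in $R$. Proposition \ref{M2} gives the reverse implication using the multiplication hypothesis on $M$: if $(P :_R M)$ is right $S$-prime then $P$ is $S$-prime. Together they give the biconditional claimed in (1). Note also that both propositions assume $(P :_R M) \cap S = \emptyset$, which is built into the definitions of both notions, so no compatibility check is required.

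For part (2), I would take $I := (P :_R M)$. By part (1), $I$ is a right $S$-prime ideal of $R$. It then remains only to note that $\mathrm{Ann}(M) = (0 :_R M) \subseteq (P :_R M) = I$, since $0 \subseteq P$ implies that any $r \in R$ with $Mr = 0$ automatically satisfies $Mr \subseteq P$. Thus the desired right $S$-prime ideal containing $\mathrm{Ann}(M)$ is exhibited explicitly.

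There is essentially no obstacle here: the entire content of the corollary has been done in Propositions \ref{M1} and \ref{M2}, and the observation $\mathrm{Ann}(M) \subseteq (P :_R M)$ is definitional. The only minor point worth flagging is that part (1) is not symmetric in its use of the multiplication hypothesis — one direction is free — but since the corollary is stated under the multiplication assumption, this asymmetry does not need to be emphasized in the write-up.
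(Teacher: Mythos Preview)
Your proposal is correct and matches the paper's own proof essentially verbatim: part (1) is obtained by combining Propositions \ref{M1} and \ref{M2}, and part (2) takes $I=(P:_R M)$, observes $\mathrm{Ann}(M)\subseteq (P:_R M)$, and invokes part (1).
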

\begin{proof} $(1)$ By Proposition \ref{M1} and Proposition \ref{M2}.

$(2)$  For all $x\in Ann(M)$, $Mx=0\in P$, hence, $x\in (P :_R M)$, and by $(1)$, $(P :_R M)$ is  right $S$-prime ideal. 
\end{proof}

In refer to \cite{DDD}, an  $R$-module $M$ over a commutative ring  $R$ is called an $S$-multiplication module (where $S$ is a multiplicatively closed subset of $R$), if for each submodule $N$ of $M$, there exist $s \in S$ and an ideal $I$ of $R$ such that $sN\subseteq IM \subseteq N$. In the following, we give the definition in noncommutative state.

\begin{definition}  Let $S \subseteq R$ be an $m$-system of a ring $R$, and let $M$ be a right  $R$-module. We call $M$  an $S$-multiplication module,  if for each submodule $N$ of $M$, there exist $s \in S$ and an ideal $I$ of $R$ such that $N\langle s\rangle\subseteq MI \subseteq N$.
\end{definition}

Observe that if $M$  is an $S$-multiplication module,  then by our definiton,  $N\langle s\rangle\subseteq MI \subseteq N$, for some submodule $N$ of $M$,  $s \in S$, and an ideal $I$ of $R$. Thus, $N\langle s\rangle\subseteq MI\subseteq M(N :_R M)\subseteq N$. 

\begin{example} Let $S \subseteq R$ be an $m$-system of a ring $R$.

$(1)$ Every multiplication right R-module is an S-multiplication right module,  and if  S consist of units of R, then, the two concepts are coincide.  

$(2)$ If $Ann(M)\cap S\neq\phi$, then take $s\in Ann(M)\cap S$, we get $\langle s\rangle\subseteq  (N :_R M)$, hence, $N\langle s\rangle\subseteq  M(N :_R M)\subseteq N$. Thus, M is $S$-multiplication.

\end{example}

Next we give an $S$-version of $(1)$ of Corollary \ref{SMM}.

\begin{theorem} Let $S \subseteq R$ be an $m$-system of a ring $R$ and let M be an S-multiplication right R-module, and P a submodule of M. The ideal $(P :_R M)$ is a right S-prime ideal of R if and only if P is an S-prime submodule of M.
\end{theorem}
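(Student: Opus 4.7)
The plan is to handle both directions, with the nontrivial content lying in the converse. The forward implication---that $P$ being an $S$-prime submodule forces $(P:_R M)$ to be a right $S$-prime ideal---is exactly Proposition \ref{M1} and does not use the $S$-multiplication hypothesis at all.

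For the converse, suppose $(P:_R M)$ is a right $S$-prime ideal with associated witness $s_0 \in S$. I would verify the criterion provided by Theorem \ref{EQ}: given an ideal $J$ of $R$ and a submodule $N$ of $M$ with $NJ \subseteq P$, produce an $s' \in S$ such that $J\langle s'\rangle \subseteq (P:_R M)$ or $N\langle s'\rangle \subseteq P$. First apply $S$-multiplication to $N$ to obtain $s_1 \in S$ and an ideal $I$ of $R$ with $N\langle s_1\rangle \subseteq MI \subseteq N$; by the observation recorded just after the definition of $S$-multiplication, this refines to $N\langle s_1\rangle \subseteq M(N:_R M) \subseteq N$. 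From $NJ \subseteq P$ we then get $M(N:_R M)J \subseteq NJ \subseteq P$, hence $(N:_R M)J \subseteq (P:_R M)$. Since $(P:_R M)$ is right $S$-prime with witness $s_0$, either $J\langle s_0\rangle \subseteq (P:_R M)$ or $(N:_R M)\langle s_0\rangle \subseteq (P:_R M)$.

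The first alternative already has the required form (take $s' = s_0$). In the second alternative one only obtains $M(N:_R M)\langle s_0\rangle \subseteq M(P:_R M) \subseteq P$, and combining with the $S$-multiplication inclusion $N\langle s_1\rangle \subseteq M(N:_R M)$ yields $N\langle s_1\rangle \langle s_0\rangle \subseteq P$. The main (and only real) obstacle is that Theorem \ref{EQ} asks for a single element of $S$, whereas we have produced a product of two principal ideals $\langle s_1\rangle \langle s_0\rangle$. This is precisely where the $m$-system hypothesis is essential: choose $r \in R$ with $s' := s_1 r s_0 \in S$. A direct check gives $\langle s'\rangle = R s_1 r s_0 R \subseteq R s_1 R \cdot R s_0 R = \langle s_1\rangle \langle s_0\rangle$, and therefore $N\langle s'\rangle \subseteq N \langle s_1\rangle \langle s_0\rangle \subseteq P$. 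Since $\langle s'\rangle \subseteq \langle s_0\rangle$ as well, this same $s'$ still satisfies $J\langle s'\rangle \subseteq (P:_R M)$ in the first alternative, so both cases are uniformly covered. Invoking Theorem \ref{EQ} then concludes that $P$ is an $S$-prime submodule of $M$.
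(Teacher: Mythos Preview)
Your route via Theorem \ref{EQ} is the natural extension of the argument in Proposition \ref{M2} and is cleaner than the paper's element-wise approach; the paper instead starts from $mRa\subseteq P$, applies $S$-multiplication to $mR$, and pushes the computation through at the level of elements. The core idea in both is the same: use the $S$-multiplication inclusion $N\langle s_1\rangle\subseteq M(N:_RM)$ to convert $NJ\subseteq P$ into a product of ideals inside $(P:_RM)$, apply the right $S$-prime ideal hypothesis, then use the $m$-system to merge the two witnesses into one.

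There is, however, a genuine gap in your argument concerning uniformity of the witness. Theorem \ref{EQ} characterises ``$S$-prime submodule \emph{associated with $s$}'': a single $s\in S$ must work for every pair $(N,J)$. Your $s_1$ is produced by applying $S$-multiplication to the particular submodule $N$, so $s' = s_1 r s_0$ depends on $N$; the $m$-system trick collapses two $S$-elements into one but does not remove this dependence. Thus you have not verified the hypothesis of Theorem \ref{EQ}. The paper faces exactly the same issue (its $s_2$ depends on $m$) and deals with it by an additional step: invoking an argument ``similar to Theorem \ref{MOD}'' to establish $(P:_M\langle s_2\rangle)\subseteq (P:_M\langle s_1\rangle)$, which forces $m\langle s_1\rangle\subseteq P$ with the \emph{fixed} witness $s_1$ of the $S$-prime ideal $(P:_RM)$. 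That reduction to a uniform witness is precisely what your argument is missing.
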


\begin{proof}  Suppose the ideal $(P :_R M)$ is a right $S$-prime ideal, and let $mRa\subseteq P$, for some $a \in R$ and $m\in M$. Since $M$ is $S$-multiplication, there exsits $s\in S$ such that $mR\langle s\rangle\subseteq M(mR:_RM)\subseteq mR$. Let $x\in(mR:_RM)$, then, $MxRa\subseteq mRa\subseteq P$, hence, $xRa\subseteq (P :_R M)$. By assumption, either $x\langle s_1\rangle\subseteq   (P :_R M)$ or $a\langle s_1\rangle\subseteq   (P :_R M)$. Assume that $a\langle s_1\rangle\not\subseteq  (P :_R M)$, then,   $x\langle s_1\rangle\subseteq   (P :_R M)$, consequently, $M(mR:_RM)\langle s_1\rangle \subseteq  M (P :_R M)\subseteq P$. Thus,    $mR\langle s\rangle\langle s_1\rangle \subseteq P$. Now since  $S$ is an $m$-system,  there exists $r_1\in R$ such that $s_2=sr_1s_1\in S$, hence, $m\langle s_2\rangle\subseteq mR\langle s\rangle\langle s_1\rangle \subseteq P$. Thus, $m\in (P:_M \langle s_2\rangle)$. With a similar discussion to that in Theorem \ref{MOD}, we can conclude that $m\in (P:_M \langle s_2\rangle)\subseteq(P:_M \langle s_1\rangle)$, so $m\langle s_1\rangle\subseteq P$.  Therefore, $P$ is an $S$-prim submodule of $M$.

Conversely, suppose that $P$ is an $S$-prime submodule, then, $(P :_R M)$ is a right $S$-prime ideal by Proposition \ref{M1}.
\end{proof}

In refer to \cite{DDD},  we see a generalization of Prime Avoidance Lemma for $S$-multiplication modules, in commutative state, has been proven. In the following we give an  $S$-version of  Prime Avoidance Lemma for $S$-multiplication right modules. The proof is similar to the proof of Theorem 2 of \cite{DDD}.

\begin{theorem} Let S be an m-system of a ring R, and  M be an S-multiplication right R-module and $P, P_1, \cdots, P_n$  submodules of M, at least $n-2$ of which are S-prime. If $P\subseteq P_1\cup P_2\cup \cdots\cup P_n$, then, $P \langle s\rangle\subseteq P_i$ for some $s\in S$.
\end{theorem}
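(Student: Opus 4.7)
The plan is to induct on $n$, mirroring the proof of Theorem 2 of \cite{DDD} in the commutative case while systematically replacing multiplicative closure by the $m$-system property of $S$. The case $n = 1$ is immediate, and $n = 2$ is the classical two-submodule argument requiring no $S$-primality: if $P \not\subseteq P_1$ and $P \not\subseteq P_2$, pick $x \in P \setminus P_1$ and $y \in P \setminus P_2$; then $x \in P_2$, $y \in P_1$, and $x + y \in P \subseteq P_1 \cup P_2$ forces either $x \in P_1$ or $y \in P_2$, a contradiction. So $P \subseteq P_i$, and $P\langle s\rangle \subseteq P_i$ for every $s \in S$.

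For the inductive step $n \geq 3$, after relabeling I may assume $P_n$ is $S$-prime with associated element $s_n \in S$. If $P \subseteq \bigcup_{j \neq k} P_j$ for some $k$, the inductive hypothesis applies to the remaining $n - 1$ submodules (still containing at least $(n-1) - 2$ $S$-prime ones) and delivers the conclusion. Otherwise, for each $i$ choose $x_i \in P \setminus \bigcup_{j \neq i} P_j$, so $x_i \in P_i$ but $x_i \notin P_j$ for $j \neq i$; the aim is to construct an element $z \in P$ avoiding every $P_i$, which would contradict $P \subseteq \bigcup P_i$.

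To construct $z$ in the noncommutative setting (where products of module elements are unavailable), I invoke $S$-multiplication on each cyclic submodule $x_iR$ (for $i < n$) to obtain $t_i \in S$ and an ideal $A_i \subseteq (x_iR :_R M)$ with $x_iR\langle t_i\rangle \subseteq MA_i \subseteq x_iR \subseteq P_i$. Since each $A_i$ is a two-sided ideal, $M(A_1 A_2 \cdots A_{n-1}) \subseteq \bigcap_{i < n} MA_i \subseteq \bigcap_{i < n} P_i$, so for any $m$ in this product submodule the element $z := x_n + m$ lies in $P$ and automatically satisfies $z \notin P_i$ for each $i < n$, because $z - x_n \in P_i$ and $x_n \notin P_i$. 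The decisive and most delicate step is to ensure $z \notin P_n$ as well. If $M(A_1 \cdots A_{n-1}) \not\subseteq P_n$, pick $m \notin P_n$ to get $z \in P \setminus \bigcup_i P_i$, contradicting $P \subseteq \bigcup P_i$. Otherwise $A_1 \cdots A_{n-1} \subseteq I_n := (P_n :_R M)$, which by Proposition \ref{M1} is a right $S$-prime ideal; iterating its defining property and consolidating the resulting auxiliary $S$-elements into a single one via the $m$-system closure yields $A_{i_0}\langle s\rangle \subseteq I_n$ for some $i_0 < n$ and $s \in S$, whence $x_{i_0} R\langle t_{i_0}\rangle\langle s\rangle \subseteq MI_n \subseteq P_n$, and extracting an element $s^{\ast} \in S$ from $\langle t_{i_0}\rangle\langle s\rangle$ (again by the $m$-system property) gives $x_{i_0}\langle s^{\ast}\rangle \subseteq P_n$.

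The hardest part, and the main obstacle, is then to propagate this individual collapse $x_{i_0}\langle s^{\ast}\rangle \subseteq P_n$ back to a statement $P\langle \tilde s\rangle \subseteq P_n$ about the whole submodule $P$, which is precisely the conclusion. This requires combining the local information with the $S$-multiplication hypothesis applied to $P$ itself and carefully tracking how the various auxiliary $S$-elements ($s_n$, the $t_i$'s, $s$, $s^{\ast}$, etc.) merge into a single one via the $m$-system property. This bookkeeping — the noncommutative analogue of the multiplicative-closure manipulations in the commutative proof of Theorem 2 of \cite{DDD} — is the principal technical hurdle throughout the argument.
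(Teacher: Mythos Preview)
Your argument has a genuine gap, and it is precisely the step you flag as ``hardest.'' In your sub-case where $A_1\cdots A_{n-1}\subseteq (P_n:_RM)$, you arrive at $x_{i_0}\langle s^{\ast}\rangle\subseteq P_n$ for a \emph{single} element $x_{i_0}\in P$. This is not a bookkeeping problem: there is no mechanism that turns information about one element into $P\langle\tilde s\rangle\subseteq P_n$. The $S$-multiplication hypothesis applied to $P$ gives you $P\langle t\rangle\subseteq M(P:_RM)\subseteq P$, but it tells you nothing about $(P:_RM)$ versus $(P_n:_RM)$ from $x_{i_0}$ alone. Nor is $x_{i_0}\langle s^{\ast}\rangle\subseteq P_n$ a contradiction, since $x_{i_0}\notin P_n$ is compatible with $x_{i_0}\langle s^{\ast}\rangle\subseteq P_n$. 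So in this branch your argument neither terminates in a contradiction nor reaches the conclusion.

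The paper's proof avoids this trap by never attaching ideals to the individual witnesses $x_i$. Instead it works throughout with the residual ideals $(P:_RM)$ and $(P_i:_RM)$, and splits into two cases according to whether $(P_k:_RM)\langle s\rangle\subseteq(P_t:_RM)$ ever occurs. If it never occurs, the classical avoidance argument (carried out with the $x_i$'s) gives $P\cap\bigl(\bigcap_{i\le n}P_i\bigr)\subseteq P_{n+1}$, hence the product $(P:_RM)(P_1:_RM)\cdots(P_n:_RM)\subseteq(P_{n+1}:_RM)$; $S$-primality of $(P_{n+1}:_RM)$ then forces one of the residuals $\langle s\rangle$-into $(P_{n+1}:_RM)$, contradicting the case hypothesis. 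If such a containment does occur, $S$-multiplication applied to $P_k$ (not to a cyclic $x_iR$) gives $P_k\langle s_2\rangle\subseteq P_t$, so the union genuinely shrinks and induction applies. The essential difference is that the paper's ideals $(P_i:_RM)$ carry global information about the $P_i$, whereas your $A_i\subseteq(x_iR:_RM)$ only see the single element $x_i$; that is why your second branch cannot close.
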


\begin{proof} The proof is by induction for $n\geq2$. If $n=2$, then, $P\subseteq P_1\cup P_2$, hence, either $P\subseteq P_1$ or $P\subseteq P_2$, so $P\langle s\rangle\subseteq P_i$ for all $s\in S$. 

Let us now suppose that the requirement is true, and suppose $n\geq3$, and $P\subseteq P_1\cup P_2\cup \cdots\cup P_{n+1}$.
Assume $P\not\subseteq \bigcup_{i\neq k}P_i$ for each $k\in\{1, 2, \cdots, n+1\}$. 

If for all $k, t\in\{1, 2, \cdots, n+1\}$ with $k\neq t$, and all $s\in S$, 
\[
(P_k:_R M)\langle s\rangle\not\subseteq(P_t:_R M),
 \]
then, we see that $P \langle s\rangle\subseteq P_i$, for some $s\in S$. Because if $P \langle s\rangle\not\subseteq P_i$, for all $s\in S$, then since  $M$ is an $S$-multiplication, there exists $s\in S$ such that $P\langle s\rangle\subseteq  M(P :_R M)\subseteq P$. Hence, $(P:_R M)\langle s\rangle\not\subseteq(P_i:_R M)$, for all $s\in S$,  [if $(P:_R M)\langle s_1\rangle\subseteq(P_i:_R M)$ for some $s_1\in S$, then for some $s_2\in S$, $P\langle s_2\rangle\subseteq P\langle s\rangle\langle s_1\rangle\subseteq  M(P :_R M)\langle s_1\rangle\subseteq M(P_i:_R M) \subseteq P_i$, contradiction.]. Since $n\geq3$, there exists an $S$-prime submodule say $P_{n+1}$. Due to the closure property of submodules, one can show that $ P\cap(\cap^n_{i=1}P_i)\subseteq P_{n+1}$, exactly as it is done in the claim of Theorem 2 of \cite{DDD}. Thus, 
\[
(P :_R M)(P_1 :_R M)\cdots(P_n :_R M)\subseteq (P :_R M)\cap(P_1 :_R M)\cap\cdots\cap(P_n :_R M),
\]
hence, 
\[
(P :_R M)(P_1 :_R M)\cdots(P_n :_R M)\subseteq (P\cap(\cap^n_{i=1}P_i) :_R M)\subseteq (P_{n+1}:_R M)
\]
Since $P_{n+1}$ is an $S$-prime submodule,  $(P_{n+1}:_R M)$ is an $S$-prime ideal by Proposition \ref{M1}. Hence, by Corollary 2.13 of \cite{A}, there exists $s\in S$ such that either $(P :_R M)\langle s\rangle\subseteq(P_{n+1}:_R M)$ or $(P_i :_R M)\langle s\rangle\subseteq(P_{n+1}:_R M)$ for $i\in\{1, 2,\cdots, n\}$, contradiction. 

If, for each pair $(t, k)$  with $k\neq t$, and $k, t\in\{1, 2, \cdots, n+1\}$ there exists  $s\in S$, such that
\[
(P_k:_R M)\langle s\rangle\subseteq(P_t:_R M).
 \]

Then, because $M$ is an $S$-multiplication, there exists $s_1\in S$, such that $P_k\langle s_1\rangle\subseteq  M(P_k :_R M) \subseteq P_k$, thus,

\[
 P_k\langle s_2\rangle\subseteq P_k\langle s_1\rangle\langle s\rangle\subseteq  M(P_k :_R M)\langle s\rangle\subseteq M(P_t:_R M) \subseteq P_t,
 \]
 for $s_2=s_1rs\in S$ and $r\in R$.
 
 Consequently, $P\langle s_2\rangle\subseteq\bigcup_{i=1}^{n+1}P_i\langle s_2\rangle\subseteq\bigcup_{i\neq k}P_i$. Thus, by assumption, there exsits $s_3\in S$ such that $(P\langle s_2\rangle)\langle s_3\rangle\subseteq P_i$. Therefore, $P\langle s_4\rangle\subseteq P_i$ for some $s_4 =s_2r_1s_3 \in S$ and $r_1 \in R$. 
\end{proof}

\begin{proposition} Let S be an m-system of a ring R and P be an S-prime submodule of right R-module M. If N is  submodule of M with $S\cap(N:_RM)\neq \phi$ then $P(N:_RM)$ is an S-prime submodule.
\end{proposition}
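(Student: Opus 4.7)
The plan is to exploit the containment $P(N:_R M)\subseteq P$ so that we can invoke the $S$-prime property of $P$, and then upgrade the resulting witness in $S$ by the $m$-system condition so that the conclusion lives in the smaller submodule $P(N:_R M)$ rather than just in $P$.

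First I would fix an element $s\in S\cap (N:_R M)$. Since $(N:_R M)$ is a two-sided ideal of $R$, we automatically have $\langle s\rangle\subseteq (N:_R M)$. I would also verify the disjointness condition for the new submodule: because $P$ is a right submodule of $M$, the subset $P(N:_R M)$ is contained in $P$, so $(P(N:_R M):_R M)\subseteq (P:_R M)$, and hence $(P(N:_R M):_R M)\cap S\subseteq (P:_R M)\cap S=\emptyset$.

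Now take $m\in M$ and $a\in R$ with $mRa\subseteq P(N:_R M)$. Since $P(N:_R M)\subseteq P$, the $S$-primeness of $P$ yields some $s'\in S$ with
\[
a\langle s'\rangle\subseteq (P:_R M)\qquad\text{or}\qquad m\langle s'\rangle\subseteq P.
\]
Since $S$ is an $m$-system, there exists $r\in R$ such that $t:=s'rs\in S$. The key step is to show this $t$ witnesses $S$-primeness of $P(N:_R M)$. A generic element of $\langle t\rangle$ is a finite sum of terms $r_1 s' r s r_2$. In the first case, $a r_1 s'\in a\langle s'\rangle\subseteq (P:_R M)$, so $Mar_1 s'\subseteq P$, and therefore
\[
Mar_1 s' r s r_2\;\subseteq\; Prsr_2\;\subseteq\; P\cdot(N:_R M),
\]
because $rsr_2\in\langle s\rangle\subseteq(N:_R M)$. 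Summing over the terms gives $Ma\langle t\rangle\subseteq P(N:_R M)$, i.e.\ $a\langle t\rangle\subseteq(P(N:_R M):_R M)$. The second case is symmetric: $mr_1 s'\in m\langle s'\rangle\subseteq P$ gives $mr_1 s' r s r_2\in Prsr_2\subseteq P(N:_R M)$, hence $m\langle t\rangle\subseteq P(N:_R M)$.

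The main obstacle is purely bookkeeping: one must ensure the extra factor of $s$ is consumed on the correct side so that each product lands in $P\cdot(N:_R M)$ instead of only in $P$. This is handled by using that $(N:_R M)$ is two-sided to absorb any $R$-factors floating around, and by invoking the $m$-system axiom to merge the two witnesses $s',s\in S$ into the single witness $t=s'rs\in S$.
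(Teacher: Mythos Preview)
Your proof is correct and follows essentially the same route as the paper: pass from $P(N:_RM)$ up to $P$, apply the $S$-prime hypothesis on $P$ to get a witness $s'$, then multiply by the element $s\in S\cap(N:_RM)$ and use the $m$-system property to merge the two witnesses into a single $t\in S$. Your version is in fact slightly more complete than the paper's, since you explicitly verify the disjointness condition $(P(N:_RM):_RM)\cap S=\emptyset$, which the paper leaves implicit.
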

\begin{proof}  Suppose $mRa\subseteq P(N:_RM)$ for some $a \in R$ and $m\in M$, then,  $mRa\subseteq P$, hence, either $m\langle s\rangle\subseteq P$ or $a\langle s\rangle\subseteq (P:_RM)$ for some $s\in S$. Let $s_1\in S\cap(N:_RM)$, then, 
\[
\text{either   }  m\langle s\rangle s_1\subseteq P(N:_RM) \text{  or  } Ma\langle s\rangle s_1\subseteq P(N:_RM). 
\]
Since $S$ is an $m$-system, there exists $r_1\in R$ such that $s_2=sr_1s_1\in S$. Thus, either $m\langle s_2\rangle \subseteq P(N:_RM)$ or $a\langle s_2\rangle \subseteq (P(N:_RM):_R M)$. Hence, $P(N:_RM)$ is an $S$-prime submodule.
\end{proof}

From the above proposition, we can conclude the following corollary.

\begin{corollary} Let S be an m-system of a ring R and P be an S-prime submodule of right R-module M. If I is  an ideal of R with   $S\cap I\neq \phi$ then $PI$ is an S-prime submodule.
\end{corollary}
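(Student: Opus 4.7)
The plan is to apply essentially the same argument as in the preceding proposition, with the two-sided ideal $I$ itself playing the role formerly played by $(N :_R M)$. First I would verify the disjointness condition: since $PI \subseteq P$, one has $(PI :_R M) \subseteq (P :_R M)$, and so $(PI :_R M) \cap S = \phi$, making $PI$ a legitimate candidate for being $S$-prime. Note also that $PI$ is indeed a right $R$-submodule of $M$, since $I$ is a right ideal.

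Next, suppose $mRa \subseteq PI$ for some $m \in M$ and $a \in R$. Since $PI \subseteq P$ and $P$ is $S$-prime, there exists $s \in S$ with either $m\langle s\rangle \subseteq P$ or $a\langle s\rangle \subseteq (P :_R M)$. Pick $s_1 \in S \cap I$. In the first case, right-multiplying by $s_1 \in I$ gives $m\langle s\rangle s_1 \subseteq P s_1 \subseteq PI$; in the second case, $Ma\langle s\rangle s_1 \subseteq P s_1 \subseteq PI$, so $a\langle s\rangle s_1 \subseteq (PI :_R M)$. Using the $m$-system property of $S$, choose $r_1 \in R$ with $s_2 = s r_1 s_1 \in S$; then $\langle s_2\rangle = R s r_1 s_1 R$ and the inclusions above, combined with the fact that $PI$ and $(PI :_R M)$ are closed under right multiplication by $R$, yield either $m\langle s_2\rangle \subseteq PI$ or $a\langle s_2\rangle \subseteq (PI :_R M)$. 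This is exactly the $S$-prime condition for $PI$.

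The main obstacle I expect is purely bookkeeping, namely confirming that $m\langle s_2\rangle = mRsr_1 s_1 R \subseteq PI$ really follows from the earlier inclusion $m\langle s\rangle s_1 \subseteq PI$ together with closure of $PI$ under right multiplication by $R$. Since this is precisely the manipulation already carried out in the proof of the previous proposition, no genuinely new ingredient is required: the corollary amounts to observing that the ideal $I$ itself, rather than a colon ideal $(N :_R M)$, can be inserted directly into that argument once $S \cap I \neq \phi$ is assumed.
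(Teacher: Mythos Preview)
Your proposal is correct and follows exactly the route the paper intends: the paper offers no separate argument for the corollary, merely noting that it follows from the preceding proposition, and what you have written is precisely that proposition's proof with the ideal $I$ inserted in place of $(N:_R M)$. Your explicit verification of the disjointness condition $(PI:_R M)\cap S=\phi$ is in fact an improvement over the paper's treatment, which omits this check in the proof of the proposition itself.
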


\begin{theorem}\label{MODY} Let $f: M_1 \to M_2$ be an R-epimorphism. Consider $S \subseteq R$ as an $m$-system, and let $P$ be  an S-prime submodule of $M_1$ such that $\text{ker}(f) \subseteq P$. Then, $f(P)$ is an S-prime submodule of $M_2$.
\end{theorem}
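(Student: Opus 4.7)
The plan is to exploit the standard fact that, for a surjection $f: M_1 \to M_2$ with $\ker(f) \subseteq P$, we have a tight correspondence between $P$ and $f(P)$: in particular $f^{-1}(f(P)) = P$ and the colon ideals coincide. Concretely, I would first verify the disjointness condition $(f(P):_R M_2) \cap S = \emptyset$, and then check the defining $S$-prime implication for $f(P)$ by pulling everything back to $M_1$.

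First I would establish the key equality
\[
(P :_R M_1) = (f(P) :_R M_2).
\]
The inclusion $\subseteq$ is immediate: if $M_1 r \subseteq P$, then $M_2 r = f(M_1) r = f(M_1 r) \subseteq f(P)$. For $\supseteq$, take $r$ with $M_2 r \subseteq f(P)$; for any $m \in M_1$ we have $f(mr) = f(m)r \in f(P)$, hence $mr \in f^{-1}(f(P))$. Since $\ker(f) \subseteq P$, one checks $f^{-1}(f(P)) = P$, so $mr \in P$. This equality immediately yields $(f(P) :_R M_2) \cap S = (P :_R M_1) \cap S = \emptyset$.

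Next I would verify the $S$-prime condition. Suppose $m'Ra \subseteq f(P)$ for some $m' \in M_2$ and $a \in R$. Using surjectivity of $f$, write $m' = f(m)$ for some $m \in M_1$. Then $f(mRa) = m'Ra \subseteq f(P)$, whence $mRa \subseteq f^{-1}(f(P)) = P$. Since $P$ is $S$-prime in $M_1$, there exists $s \in S$ such that either $a\langle s \rangle \subseteq (P :_R M_1)$ or $m\langle s \rangle \subseteq P$. In the first case, by the equality above, $a\langle s \rangle \subseteq (f(P) :_R M_2)$. In the second case, applying $f$ gives $m'\langle s \rangle = f(m\langle s \rangle) \subseteq f(P)$, as required.

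I do not expect any serious obstacle: the only subtlety is the identity $f^{-1}(f(P)) = P$, which rests precisely on the hypothesis $\ker(f) \subseteq P$ — without this assumption the proof (and the statement) would fail. Everything else is a mechanical transport of the $S$-prime property across the epimorphism.
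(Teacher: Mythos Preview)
Your proof is correct and follows essentially the same route as the paper: pull back $m'Ra \subseteq f(P)$ to $mRa \subseteq f^{-1}(f(P)) = P$ using surjectivity and $\ker(f)\subseteq P$, apply the $S$-prime hypothesis on $P$, and push forward. The only cosmetic difference is that you isolate the full equality $(P:_R M_1) = (f(P):_R M_2)$ at the outset and derive disjointness from it, whereas the paper handles disjointness separately by contradiction; both arguments are equivalent.
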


\begin{proof} Let  $m_2Ra\subseteq f(P)$ for some $a \in R$ and $m_2\in M_2$, then, there exists $m_1\in M_1$ such that $f(m_1)=m_2$, hence, $f(m_1Ra)\subseteq f(P)$, consequently, $m_1Ra\subseteq f^{-1}(f(m_1Ra))\subseteq f^{-1}(f(P))=P$. Thus, either    $m_1\langle s\rangle\subseteq P$ or $a\langle s\rangle\subseteq (P:_RM_1)$ for some $s\in S$, and hence,  either $m_2\langle s\rangle\subseteq f(P)$ or $a\langle s\rangle\subseteq (f(P):_RM_2)$. 

Now we show that $(f(P):_R M_2)\cap S=\phi$. Assume $(f(P):_R M_2)\cap S\neq\phi$, then there exists $s\in(f(P):_R M_2)\cap S$, hence, $M_2s=f(M_1s)\subseteq f(P)$, and thus, $M_1s\subseteq f^{-1}(f(M_1s))\subseteq f^{-1}(f(P))=P$, consequently, $s\in (P:_RM_1)$, contradiction. Hence, $f(P)$ is an S-prime submodule of $M_2$.
\end{proof}

\begin{theorem}\label{K} Let $f$: $M_1 \to M_2$ be an R-homomorphism. Consider $S \subseteq R$ as an $m$-system, and let $P$ be  an S-prime submodule of $M_2$. If $(f^{-1}(P):_R M_1)\cap S=\phi$, then $f^{-1}(P)$ is an S-prime submodule of $M_1$. 
\end{theorem}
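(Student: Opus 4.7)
The plan is to verify the two clauses in the definition of an $S$-prime submodule. The disjointness condition $(f^{-1}(P):_R M_1)\cap S = \emptyset$ is given outright in the hypothesis, so only the multiplicative clause requires work. Suppose $mRa \subseteq f^{-1}(P)$ for some $m \in M_1$ and $a \in R$. Applying $f$ yields $f(m)Ra = f(mRa) \subseteq P$, and since $P$ is $S$-prime in $M_2$, there exists $s \in S$ such that
\[
f(m)\langle s\rangle \subseteq P \quad \text{or} \quad a\langle s\rangle \subseteq (P :_R M_2).
\]

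In the first case, $f(m\langle s\rangle) = f(m)\langle s\rangle \subseteq P$ immediately gives $m\langle s\rangle \subseteq f^{-1}(P)$. In the second case, I will use that $f(M_1) \subseteq M_2$ without any surjectivity assumption: indeed,
\[
f(M_1 a\langle s\rangle) = f(M_1)\, a\langle s\rangle \subseteq M_2\, a\langle s\rangle \subseteq P,
\]
so $M_1 a\langle s\rangle \subseteq f^{-1}(P)$, i.e.\ $a\langle s\rangle \subseteq (f^{-1}(P):_R M_1)$. Together, the two cases give the required alternative, so $f^{-1}(P)$ is an $S$-prime submodule of $M_1$.

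There is no real obstacle here; the only subtlety is why the extra hypothesis $(f^{-1}(P):_R M_1)\cap S = \emptyset$ is imposed. In Theorem \ref{MODY} the disjointness of $(f(P):_R M_2)$ from $S$ was deduced by pulling back through the surjection, but in the present setting $f$ need not be surjective, so one cannot hope to derive the disjointness from that of $(P :_R M_2)$ alone; making it an assumption is thus both natural and necessary, and the rest of the argument is a clean transcription of the $S$-prime condition across the map.
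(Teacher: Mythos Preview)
Your proof is correct and follows essentially the same route as the paper's: push $mRa$ forward through $f$, apply the $S$-prime condition in $M_2$, and pull each alternative back to $M_1$. If anything, your treatment of the second case is more explicit than the paper's, which simply asserts $a\langle s\rangle\subseteq (f^{-1}(P):_R M_1)$ without spelling out the chain $f(M_1)\,a\langle s\rangle\subseteq M_2\,a\langle s\rangle\subseteq P$.
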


\begin{proof}   Let  $m_1Ra\subseteq f^{-1}(P)$ for some $a \in R$ and $m_1\in M_1$, then, 
\[
f(m_1)Ra=f(m_1Ra)\subseteq f(f^{-1}(P))\subseteq P, 
\]
hence, either    $f(m_1\langle s\rangle)=f(m_1)\langle s\rangle\subseteq P$ or $a\langle s\rangle\subseteq (P:_RM_2)$ for some $s\in S$, and hence, either $m_1\langle s\rangle\subseteq f^{-1}(P)$ or $a\langle s\rangle\subseteq (f^{-1}(P):_RM_1)$. Thus, $f^{-1}(P)$ is an S-prime submodule of $M_1$.   
\end{proof}

\begin{theorem} Let $S \subseteq R$ be an $m$-system, and let $P$ be  an S-prime submodule of right R-module M. If L is a submodule of M with $L\subseteq P$, then, $P\in Spec_S (M_R)$ if and only if $P/L\in Spec_S ((M/L)_R)$.
\end{theorem}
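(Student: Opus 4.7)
The plan is to apply Theorem~\ref{MODY} and Theorem~\ref{K} to the canonical $R$-epimorphism $\pi\colon M\to M/L$, whose kernel equals $L$. This is the natural map to consider because quotients by submodules below $P$ give an obvious bijection between submodules of $M$ containing $L$ and submodules of $M/L$, sending $P$ to $P/L$.

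For the forward implication, suppose $P\in Spec_S(M_R)$. Since $\ker(\pi)=L\subseteq P$, all hypotheses of Theorem~\ref{MODY} are satisfied, and we conclude at once that $\pi(P)=P/L$ is an $S$-prime submodule of $M/L$.

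For the reverse implication, suppose $P/L\in Spec_S((M/L)_R)$, and apply Theorem~\ref{K} to $\pi$. Since $L\subseteq P$, we have $\pi^{-1}(P/L)=P$, so the conclusion of that theorem will give exactly that $P$ is an $S$-prime submodule of $M$. The one hypothesis to verify is that $(P:_R M)\cap S=\emptyset$. Here I would observe the equality $(P:_R M)=(P/L:_R M/L)$, which follows by a direct unpacking: for $r\in R$, $Mr\subseteq P$ iff $(m+L)r\in P/L$ for every $m\in M$, using $L\subseteq P$ in both directions. The right-hand colon is disjoint from $S$ by the assumption that $P/L$ is $S$-prime in $M/L$, so $(P:_R M)\cap S=\emptyset$ as required.

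The argument is essentially bookkeeping on top of the two transfer theorems already proved, so there is no real obstacle; the one mildly delicate point is the identification $(P:_R M)=(P/L:_R M/L)$, which must be invoked to verify the disjointness hypothesis of Theorem~\ref{K} in the backward direction.
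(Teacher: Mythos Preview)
Your proof is correct and matches the paper's approach: the forward direction is identical (apply Theorem~\ref{MODY} to the canonical projection), and for the converse the paper carries out the element-level check directly rather than invoking Theorem~\ref{K}, but this amounts to the same argument and uses the same key identification $(P:_R M)=(P/L:_R M/L)$ that you isolate.
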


\begin{proof} Suppose $P$ is  an $S$-prime submodule of $M$, and let $f$: $M \to M/L$ be an $R$-homomorphism defined as $f(m)=m+L$ for $m\in M$. Then by Theorem \ref{MODY},  $f(P)=P/L$ is an $S$-prime submodule.

Conversely, suppose $P/L$ is  an $S$-prime submodule of $M/L$, and let $mRa\subseteq P$ for some $a\in R$ and $m\in M$, then, $(m+L)Ra\subseteq P/L$. Thus, either $(m+L)\langle s\rangle\subseteq P/L$ or $a\langle s\rangle\subseteq (P/L:_R M/L)$, which implise either $m\langle s\rangle\subseteq P$ or $a\langle s\rangle\subseteq (P:_R M)=(P/L:_R M/L)$. Hence, $P$ is an $S$-prime submodule.
\end{proof}

\begin{corollary} Let $P$ be  an S-prime submodule of a right R-module M, and let N be a submodule of M such that $(P :_R N) \cap S=\phi$. Then, $P\cap N$ is an S-prime submodule.
\end{corollary}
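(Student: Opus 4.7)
The plan is to realize $P \cap N$ as the preimage of $P$ under the inclusion map and then invoke Theorem \ref{K} directly. Specifically, let $i\colon N \hookrightarrow M$ be the $R$-linear inclusion homomorphism; then clearly $i^{-1}(P) = P \cap N$, so the statement will follow once we verify the colon hypothesis of Theorem \ref{K}.

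To apply Theorem \ref{K} with $M_1 = N$, $M_2 = M$, and $f = i$, I need to check that $(i^{-1}(P) :_R N) \cap S = \phi$. Unpacking, $(P \cap N :_R N) = \{r \in R : Nr \subseteq P \cap N\}$; since $Nr \subseteq N$ automatically, this set equals $\{r \in R : Nr \subseteq P\} = (P :_R N)$. So the equality $(P \cap N :_R N) = (P :_R N)$ holds, and the hypothesis $(P :_R N) \cap S = \phi$ immediately gives $(i^{-1}(P) :_R N) \cap S = \phi$.

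Now Theorem \ref{K} applies: since $P$ is an $S$-prime submodule of $M$ and the required disjointness holds, $i^{-1}(P) = P \cap N$ is an $S$-prime submodule of $N$, as claimed.

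There is essentially no obstacle here; the only thing requiring a moment's care is recognizing the identification $(P \cap N :_R N) = (P :_R N)$ so that the colon hypothesis of Theorem \ref{K} matches the hypothesis provided in the corollary. Once this is noted, the result is a one-line consequence of the preimage theorem.
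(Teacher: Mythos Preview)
Your proof is correct and follows essentially the same approach as the paper's own proof: both take the inclusion $N\hookrightarrow M$, identify $i^{-1}(P)=P\cap N$, and invoke Theorem~\ref{K}. Your write-up is in fact slightly more explicit in justifying the identity $(P\cap N:_R N)=(P:_R N)$, which the paper leaves implicit.
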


\begin{proof}  Let $f$: $N \to M$ be an $R$-homomorphism defined as $f(n)=n$ for $n\in N$. Since $f^{-1}(P)=P\cap N$, then, $(f^{-1}(P):_R N)\cap S=\phi$. Hence, by Theorem \ref{K}, $P\cap N$ is an $S$-prime submodule.
\end{proof}

We need the followings lemmas 

\begin{lemma}\label{A} Let $S$ be an $m$-system of a ring $R$. If $M$ is an $m$-system of a ring $T$, then, $S\times M$ is an $m$-system of the ring $R\times T$.

\end{lemma}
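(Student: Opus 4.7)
The plan is to unwind the definition of $m$-system in the product ring $R\times T$ and verify it componentwise. Recall that a subset is an $m$-system if for any two of its elements $x,y$, there is a ring element $z$ with $xzy$ in the set.

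First I would fix two arbitrary elements $(s_1,m_1),(s_2,m_2)\in S\times M$ and seek a witness $(r,t)\in R\times T$ so that
\[
(s_1,m_1)(r,t)(s_2,m_2)=(s_1rs_2,\, m_1tm_2)
\]
lies in $S\times M$. Because multiplication in $R\times T$ is performed coordinatewise, the problem splits cleanly: I just need $s_1rs_2\in S$ and $m_1tm_2\in M$ independently.

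Next I would invoke the $m$-system hypothesis for $S$ in $R$ to produce $r\in R$ with $s_1rs_2\in S$, and the $m$-system hypothesis for $M$ in $T$ to produce $t\in T$ with $m_1tm_2\in M$. The pair $(r,t)$ is the required witness. No obstacle is expected, since the argument is a straightforward componentwise verification; the only thing to watch is that we are using the standard definition of $m$-system (allowing $x=y$), so the argument applies in particular to check the $m$-system condition for identical pairs as well.
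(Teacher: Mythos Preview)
Your proposal is correct and is exactly the routine componentwise verification the paper has in mind; the paper's own proof consists only of the sentence ``The proof is routine.''
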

\begin{proof}  The proof is  routine.
\end{proof}

\begin{lemma}\label{MO} Let $R =R_1\times R_2$, where $R_1$ and $R_2$ are rings, and let $S= S_1\times S_2$, where $S_1$ and $S_2$ are  m-systems of $R_1$ and $R_2$, respectively. If $P= P_1\times P_2$ is an ideal of R, disjoint from S,  the following are equivalent: 

$(1)$ $P\in Spec_S(R)$. 

$(2)$ Either [$P_1 \in Spec_{S_1}( R_1)$ and $P_2\cap S_2\neq \phi$] or [$P_2\in Spec_{S_2}( R_2)$ and $P_1\cap S_1\neq\phi$].

\end{lemma}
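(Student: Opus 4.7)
The plan is to exploit the product structure of the ideal $\langle(s_1,s_2)\rangle$ in $R_1\times R_2$, which by a routine check equals $\langle s_1\rangle\times\langle s_2\rangle$. Consequently, for $a=(a_1,a_2)\in R$, the product $a\langle(s_1,s_2)\rangle$ factors as $a_1\langle s_1\rangle\times a_2\langle s_2\rangle$, and containment in $P=P_1\times P_2$ reduces to two coordinate conditions. This factorisation is the main computational tool in both directions.

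For $(1)\Rightarrow(2)$, I would first fix an $s=(s_1,s_2)\in S$ with respect to which $P$ is $S$-prime, and test the definition on $a=(1,0)$ and $b=(0,1)$. Since $aRb=\{(0,0)\}\subseteq P$, Proposition 2.7 of \cite{A} forces $\langle s_1\rangle\times\{0\}\subseteq P$ or $\{0\}\times\langle s_2\rangle\subseteq P$, i.e.\ $s_1\in P_1$ or $s_2\in P_2$. The hypothesis $P\cap S=\phi$ rules out both simultaneously, so exactly one holds; up to symmetry, assume $s_2\in P_2\cap S_2$, which also forces $P_1\cap S_1=\phi$ by the same disjointness argument. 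To verify $P_1\in Spec_{S_1}(R_1)$ associated with $s_1$, I would take $a_1R_1b_1\subseteq P_1$ and lift to $(a_1,0),(b_1,0)\in R$; then $aRb=a_1R_1b_1\times\{0\}\subseteq P$, and the $S$-prime condition on $R$ projects onto the first coordinate to give $a_1\langle s_1\rangle\subseteq P_1$ or $b_1\langle s_1\rangle\subseteq P_1$, as desired.

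For $(2)\Rightarrow(1)$, I would work (up to swapping indices) with $P_1\in Spec_{S_1}(R_1)$ associated with some $s_1\in S_1$ and some $t_2\in P_2\cap S_2$, and propose $s:=(s_1,t_2)\in S$ as the associated element for $P$. Disjointness $P\cap S=\phi$ is immediate from $P_1\cap S_1=\phi$. For the $S$-prime property, I would take $aRb\subseteq P$ with $a=(a_1,a_2)$ and $b=(b_1,b_2)$; then $a_1R_1b_1\subseteq P_1$ yields (say) $a_1\langle s_1\rangle\subseteq P_1$, while the second coordinate is handled for free because $t_2\in P_2$ together with $P_2$ being a two-sided ideal forces $a_2\langle t_2\rangle\subseteq P_2$ regardless of $a_2$. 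Assembling both coordinates via the factorisation above gives $a\langle s\rangle\subseteq P$.

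The main obstacle is bookkeeping rather than genuine mathematics: one must ensure that a single associated element $s\in S$ is used uniformly for all test pairs $(a,b)$, and must verify in the converse that the chosen $t_2\in P_2\cap S_2$ genuinely absorbs arbitrary second coordinates. The latter crucially depends on $P_2$ being a two-sided ideal, so the coordinate where no $S_i$-prime hypothesis is available is kept harmless purely by ideal absorption.
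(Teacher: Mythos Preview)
Your proposal is correct and follows essentially the same approach as the paper. The only cosmetic difference is that you work throughout with the element-wise characterisation of right $S$-prime ideals (Proposition~2.7 of \cite{A}), whereas the paper's proof uses the ideal-wise Definition~2.1; since the rings have identity these are equivalent, and the test pair $(1,0),(0,1)$ you use corresponds exactly to the paper's choice $a=1$, $b=1$ in $(a,0)R(0,b)\subseteq P$.
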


\begin{proof} $(1)\Rightarrow(2)$ Suppose that $P$ is an $S$-prime ideal of $R$. 

Since 
$(a, 0)R(0, b)\subseteq P$ for all $a\in R_1$ and all $b \in R_2$, then, either $ (a, 0)\langle (s_1, s_2)\rangle\subseteq P$ or $ (0, b)\langle (s_1, s_2)\rangle\subseteq P$ for some $(s_1, s_2)\in S$, hence, either $ a\langle s_1\rangle\subseteq P_1$ or $ b\langle  s_2\rangle\subseteq P_2$. For $a=1$ and $b=1$, we obtain  either $ s_1\in\langle s_1\rangle\subseteq P_1$ or $ s_2\in\langle  s_2\rangle\subseteq P_2$. Thus, either  $P_1\cap S_1\neq\phi$ or $P_2\cap S_2\neq \phi$. 

Now, without loss of generality, suppose that $P_2\cap S_2\neq \phi$, then we should show that $P_1 \in Spec_{S_1}( R_1)$. Let $IJ\subseteq P_1$ for some ideals $I$, $J$ of $R_1$, then, $(I,0)\times(J,0)\subseteq P_1\times P_2$, and hence, either $(I,0)\langle (s_1, s_2)\rangle\subseteq P_1\times P_2$ or $(J,0)\langle (s_1, s_2)\rangle\subseteq P_1\times P_2$, consequently, either $I\langle s_1\rangle\subseteq P_1$ or $J\langle s_1\rangle\subseteq P_1$. Since $P$ is disjoint from $S$, we get $P_1 \cap S_1 =\phi$. Thus, $P_1$ is an $S_1$-prime ideal of $R_1$. If $P_1\cap S_1\neq\phi$, then similar to the previous discussion we can prove $P_2$ is an $S_2$-prime ideal of $R_2$.

$(2)\Rightarrow(1)$ Without loss of generality, suppose that $P_1 \in Spec_{S_1}( R_1)$ and $P_2\cap S_2\neq \phi$.  Let $(I_1\times J_1)(I_2\times J_2)\subseteq P$ for some ideals $I_1$, $I_2$ of $R_1$ and some ideals $J_1$, $J_2$ of $R_2$, then, $(I_1 I_2)\times(J_1 J_2)\subseteq P_1\times P_2$, hence,  $I_1 I_2\subseteq P_1$, consequently, either $I_1\langle s_1\rangle\subseteq P_1$ or $I_2\langle s_1\rangle\subseteq P_1$ for some $s_1 \in S_1$. On the ather hand, let $s_2\in P_2\cap S_2$, in this case we obtain  

\[
\text{either  } (I_1\langle s_1\rangle, J_1\langle s_2\rangle)\subseteq P_1\times P_2 \text{  or  } (I_2\langle s_1\rangle, J_2\langle s_2\rangle)\subseteq P_1\times P_2.
\]

Hence, 
\[
\text{either  } (I_1\times J_1)\langle (s_1,s_2)\rangle\subseteq P_1\times P_2 \text{  or  } (I_2\times J_2)\langle (s_1,s_2)\rangle\subseteq P_1\times P_2.
\]
Thus, $P$ is an $S$-prime ideal of $R$. 

If $P_2 \in Spec_{S_2}( R_2)$ and $P_1\cap S_1\neq \phi$, then, similar to the previous discussion we can see that $P$ is an $S$-prime ideal of $R$. 
\end{proof}

\begin{theorem} Let  $R_1$ and $R_2$ be any rings, and let $M =M_1\times M_2$ be a right $R$-module, where $R=R_1\times R_2$  and let $S= S_1\times S_2$, where $S_1$ and $S_2$ are  m-systems of $R_1$ and $R_2$, respectively. If  $P= P_1\times P_2$ is a submodule of M, then, the followings are equivalent: 

$(1)$ $P\in Spec_S (M_R)$. 

$(2)$ Either [$P_1 \in Spec_{S_1}( {M_1}_{R_1})$ and $(P_2:_{R_2} M_2)\cap S_2\neq \phi$] or [$P_2 \in Spec_{S_2}( {M_2}_{R_2})$ and $(P_1:_{R_1} M_1)\cap S_1\neq \phi$].
\end{theorem}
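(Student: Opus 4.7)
The plan is to mirror the strategy of Lemma \ref{MO}, with the product structure transported from ideals to submodules; the key observation throughout is that for $s=(s_1,s_2)\in S$ the two-sided ideal $\langle s\rangle$ of $R=R_1\times R_2$ factors as $\langle s_1\rangle\times\langle s_2\rangle$, and $(P:_R M)=(P_1:_{R_1}M_1)\times(P_2:_{R_2}M_2)$.

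For $(1)\Rightarrow(2)$, I will first locate which factor hosts the $S_i$-prime submodule. Applying Theorem \ref{EQ} with the submodule $N=M_1\times 0$ and the ideal $J=0\times R_2$ (so that $NJ=0\subseteq P$) yields some $s=(s_1,s_2)\in S$ with either $N\langle s\rangle\subseteq P$ or $J\langle s\rangle\subseteq(P:_RM)$. The first alternative gives $M_1\langle s_1\rangle\subseteq P_1$, i.e., $s_1\in(P_1:_{R_1}M_1)\cap S_1$; the second gives $s_2\in(P_2:_{R_2}M_2)\cap S_2$. The hypothesis $(P:_RM)\cap S=\emptyset$ forbids both from occurring simultaneously, so exactly one of these intersections is nonempty. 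Suppose without loss of generality that $(P_2:_{R_2}M_2)\cap S_2\ne\emptyset$ and $(P_1:_{R_1}M_1)\cap S_1=\emptyset$. To show $P_1\in\mathrm{Spec}_{S_1}({M_1}_{R_1})$, take $m_1\in M_1$ and $a_1\in R_1$ with $m_1R_1a_1\subseteq P_1$, and set $m=(m_1,0)$, $a=(a_1,0)$. Then $mRa\subseteq P$, so $S$-primeness of $P$ produces $s=(s_1,s_2)\in S$ with $m\langle s\rangle\subseteq P$ or $a\langle s\rangle\subseteq(P:_RM)$. Projecting onto the first coordinate gives $m_1\langle s_1\rangle\subseteq P_1$ or $a_1\langle s_1\rangle\subseteq(P_1:_{R_1}M_1)$, as required.

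For $(2)\Rightarrow(1)$, I assume WLOG that $P_1\in\mathrm{Spec}_{S_1}({M_1}_{R_1})$ and fix an element $s_2\in(P_2:_{R_2}M_2)\cap S_2$. The disjointness condition $(P:_RM)\cap S=\emptyset$ is immediate: if some $(t_1,t_2)\in(P:_RM)\cap S$ existed, the first coordinate would give $t_1\in(P_1:_{R_1}M_1)\cap S_1$, contradicting that $P_1$ is $S_1$-prime. Now let $m=(m_1,m_2)\in M$ and $a=(a_1,a_2)\in R$ with $mRa\subseteq P$. Coordinate-wise this gives $m_1R_1a_1\subseteq P_1$, so by $S_1$-primeness there is $s_1\in S_1$ with $m_1\langle s_1\rangle\subseteq P_1$ or $a_1\langle s_1\rangle\subseteq(P_1:_{R_1}M_1)$. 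Take $s=(s_1,s_2)\in S$; since $s_2\in(P_2:_{R_2}M_2)$ we automatically have $m_2\langle s_2\rangle\subseteq P_2$ and $a_2\langle s_2\rangle\subseteq\langle s_2\rangle\subseteq(P_2:_{R_2}M_2)$, so the two coordinates combine to yield $m\langle s\rangle\subseteq P$ or $a\langle s\rangle\subseteq(P:_RM)$. The symmetric case is identical.

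The main obstacle is bookkeeping, not substance: I have to keep track of where $\langle s\rangle$ sits as an ideal of the product ring and verify that the module action distributes correctly across the factorization, so that conclusions about $m_i\langle s_i\rangle$ and $a_i\langle s_i\rangle$ on each side lift to the full product. The second minor point is remembering to justify $(P:_RM)\cap S=\emptyset$ separately in the backward direction, since this is part of the definition of $S$-prime submodule and is not automatic from the two-factor hypotheses.
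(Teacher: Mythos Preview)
Your proof is correct and follows the same overall architecture as the paper's: in each direction you identify which coordinate carries the $S_i$-prime submodule and push the defining condition through the product decomposition $\langle (s_1,s_2)\rangle=\langle s_1\rangle\times\langle s_2\rangle$ and $(P:_RM)=(P_1:_{R_1}M_1)\times(P_2:_{R_2}M_2)$.

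There are two minor differences worth noting. First, for $(1)\Rightarrow(2)$ the paper reaches the dichotomy ``$(P_1:_{R_1}M_1)\cap S_1\ne\emptyset$ or $(P_2:_{R_2}M_2)\cap S_2\ne\emptyset$'' by invoking Proposition~\ref{M1} (so that $(P:_RM)$ is a right $S$-prime ideal) together with Lemma~\ref{MO}, whereas you obtain it directly from a single application of Theorem~\ref{EQ} to $N=M_1\times 0$ and $J=0\times R_2$; your route is more self-contained. Second, in $(2)\Rightarrow(1)$ you explicitly verify $(P:_RM)\cap S=\emptyset$, which is part of the definition of $S$-prime submodule; the paper's proof omits this check, so your argument is actually more complete on that point. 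Otherwise the two proofs differ only in whether the element-wise definition or the ideal/submodule form of Theorem~\ref{EQ} is used, which is cosmetic.
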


\begin{proof} $(1)\Rightarrow(2)$ Suppose $P$ is an $S$-prime submodule  of $M$, then, by Proposition \ref{M1}, $(P:_R M)$ is an $S$-prime ideal of $R$. However, 
\[
(P:_R M)=(P_1:_{R_1} M_1)\times(P_2:_{R_2} M_2).
\]
Thus, by Lemma \ref{MO}, either   
\[
 (P_1:_{R_1} M_1) \in Spec_{S_1}( R_1) \text{   and   } (P_2:_{R_2} M_2)\cap S_2\neq \phi,
 \]
or 
  \[
  (P_2:_{R_2} M_2) \in Spec_{S_2}(R_2) \text{   and   }(P_1:_{R_1} M_1)\cap S_1\neq \phi. 
  \]
  Without loss of generality, suppose that $(P_2:_{R_2} M_2)\cap S_2\neq \phi$, then, we should show that $P_1 \in Spec_{S_1}({M_1}_{R_1})$. Let $NI\subseteq P_1$ for some ideal $I$ of $R_1$, and some submodule $N$, of $M_1$. Then, 
  \[
   (NI \times 0_{M_2})=(N,0_{M_2})\times(I,0_{R_2})\subseteq P_1\times P_2=P,
\]
 hence,
 \[
  \text{either    } (N,0_{M_2})\times\langle (s_1,s_2)\rangle\subseteq P_1\times P_2 \text{   or   } (I,0_{R_2})\times\langle (s_1,s_2)\rangle\subseteq (P:_R M),
\]
 for some $(s_1, s_2)\in S$, hence,
\[
  \text{either    } N\langle s_1\rangle\subseteq P_1 \text{   or   } I\langle s_1\rangle\subseteq (P_1:_{R_1} M_1).
\]
Thus, $P_1$ is an $S_1$-prime submodule of $M_1$. If $(P_1:_{R_1} M_1)\cap S_1\neq \phi$. , then similar to the previous discussion we can prove $P_2$ is an $S_2$-prime submodule of $M_2$.

$(2)\Rightarrow(1)$   Without loss of generality, suppose that $P_1 \in Spec_{S_1}( {M_1}_{R_1})$ and $(P_2:_{R_2} M_2)\cap S_2\neq \phi$.  Let $(N_1\times N_2)(I_1\times I_2)\subseteq P$ for some ideals $I_1$, $I_2$ of $R$ and some submodules $N_1$, $N_2$ of $M$, then, $(N_1 I_1)\times(N_2 I_2)\subseteq P_1\times P_2$, hence,  $N_1 I_1\subseteq P_1$, consequently, either $N_1\langle s_1\rangle\subseteq P_1$ or $I_1\langle s_1\rangle\subseteq (P_1:_{R_1} M_1)$ for some $s_1 \in S_1$. On the other hand, let $s_2\in (P_2:_{R_2} M_2)\cap S_2$, in this case we obtain  $N_2\langle s_2\rangle\subseteq M_2\langle s_2\rangle\ \subseteq P_2$ and $I_2\langle s_2\rangle\subseteq(P_2:_{R_2} M_2)$. Thus, for $s=(s_1,s_2)$, we have either

\[
	 (I_1\times I_2)\langle s\rangle=(I_1\langle s_1\rangle)\times(I_2\langle s_2\rangle)\subseteq (P_1:_{R_1} M_1)\times(P_2:_{R_2} M_2)=(P:_R M), 
\]

or

\[
 (N_1\times N_2)\langle s\rangle=(N_1\langle s_1\rangle)\times(N_2\langle s_2\rangle)\subseteq P_1\times P_2=P.
 \]
 
 Therefore, $P$ is an $S$-prime submodule of $M$. 

If $P_2 \in Spec_{S_2}( {M_2}_{R_2})$ and $(P_1:_{R_1} M_1)\cap S_1\neq \phi$, then, similar to the previous discussion we can see that $P$ is an $S$-prime submodule of $M$. 
\end{proof}

\section{Right $S$-Noetherian Rings}

\begin{definition} Let $S$ be an $m$-system of a ring $R$, and $M$ a right $R$-module.

$(1)$ A submodule $N$ of $M$ is called $S$-finite ($S$-principal) if there exist an element $s \in S$ and a finitely
generated (principal) submodule $F$ of $M$ such that $N\langle s\rangle \subseteq F\subseteq N$. Particularly, $M$ is called $S$-finite ($S$-principal) if there exists a finitely generated (principal) submodule $F$ of $M$ such that $M\langle s\rangle \subseteq F$.

$(2)$ The right $R$-module $M$ is called $S$-Noetherian if every submodule of $M$ is $S$-finite.

$(3)$ [Definition 3.6 of \cite{A}] 
Let $S$ be an $m$-system of a ring $R$. A ring $R$ is called right  $S$-Noetherian if every right  ideal of $R$ is   $S$-finite as a right $R$-module.
\end{definition}

 Let $S$ be an $m$-system of a ring $R$, and $M$ a right $R$-module. Every submodule of an $S$-Noetherian module is $S$-Noetherian. In addition, if $S1 \subseteq S2$ are $m$-systems of a ring, then any $S_1$-Noetherian module is $S_2$-Noetherian.

 \begin{proposition}  Let $S$ be an $m$-system of a ring $R$, and $M$ a multiplication finitely generated right $R$-module. If every   S-prime submodule is S-finite, then every prime submodule  is S-finite.  
  \end{proposition}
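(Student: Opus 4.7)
The plan is to reduce to the hypothesis via Example \ref{1}, which shows that any prime submodule $P$ with $(P :_R M) \cap S = \phi$ is automatically $S$-prime. Since the $S$-version of Cohen's theorem recalled in the introduction adopts the same disjointness convention when it speaks of ``prime (disjoint from $S$)'' right ideals, I read the proposition in the same spirit and treat the disjoint case as the main content of the argument.

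The disjoint case is immediate. Let $P$ be a prime submodule of $M$ with $(P :_R M) \cap S = \phi$; by Example \ref{1}, $P$ is $S$-prime, and the hypothesis then provides $s\in S$ together with a finitely generated submodule $F\subseteq P$ satisfying $P\langle s\rangle\subseteq F$, which is exactly $S$-finiteness of $P$.

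For a prime submodule $P$ with $(P :_R M) \cap S \neq \phi$, I would pick $s \in (P :_R M) \cap S$ and use that $M$ is multiplication to obtain $M\langle s\rangle \subseteq M(P :_R M) = P$. Writing $M = e_1 R + \cdots + e_n R$ for a finite generating set, the submodule $F = \sum_{i=1}^n e_i sR$ is finitely generated and sits inside $P$, and the remaining task is to produce $s' \in S$ with $P\langle s'\rangle \subseteq F$. This is the main obstacle: in the noncommutative setting, $(P:_R M)\,s$ need not lie in $sR$, so the direct bound $P\langle s\rangle \subseteq \sum_i e_i \langle s\rangle$ may overshoot $F$, and $\sum_i e_i \langle s\rangle$ need not itself be finitely generated. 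I would handle this by invoking the $m$-system property to replace $s$ by a product $s' = s r s_1 \in S$ chosen so that the mismatch collapses, mirroring the product-construction maneuver used in the converse direction of Theorem \ref{MOD}.
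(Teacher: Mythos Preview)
Your disjoint case is exactly the paper's argument. In the non-disjoint case you correctly pick $s\in(P:_RM)\cap S$ and invoke the multiplication hypothesis, but your choice $F=\sum_i e_i sR$ creates an obstacle that you do not actually overcome: the appeal to an $m$-system manoeuvre ``mirroring Theorem~\ref{MOD}'' is a hand-wave, and there is no visible reason why any product $s'=srs_1$ should force $P\langle s'\rangle$ into $\sum_i e_i sR$.

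The paper avoids this difficulty by a different choice of intermediate submodule: it takes $F=M\langle s\rangle$ rather than $\sum_i e_i sR$. Since $s\in(P:_RM)$ and $(P:_RM)$ is two-sided, one has $\langle s\rangle\subseteq(P:_RM)$ and $(P:_RM)\langle s\rangle\subseteq\langle s\rangle$, so the multiplication hypothesis yields directly
\[
P\langle s\rangle = M(P:_RM)\langle s\rangle \subseteq M\langle s\rangle \subseteq M(P:_RM) = P.
\]
The left inclusion is automatic; no further element of $S$ is needed. Thus your obstacle is self-inflicted by choosing too small an $F$.

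It is worth noting that your skepticism about finite generation (``$\sum_i e_i\langle s\rangle$ need not itself be finitely generated'') is exactly a concern about the paper's final step, since $M\langle s\rangle=\sum_i e_i\langle s\rangle$. The paper simply asserts that $M\langle s\rangle$ is finitely generated because $M$ is; whatever one thinks of that in full noncommutative generality, your proposed $m$-system trick does not address it either, so it does not give you an advantage over the paper's route.
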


 \begin{proof}  Let $N$ be a prime submodule of $M$. If $(N:_RM)\cap S= \phi$, then, by Example \ref{1}, $N$ is an $S$-prime submodule, hence $N$ is $S$-finite.  If $(N:_RM)\cap S\neq \phi$, then for $s\in (N:_RM)\cap S$, $(N:_RM)\langle s\rangle \subseteq\langle s\rangle \subseteq(N:_RM)$, hence,
 
 \[  
 N\langle s\rangle=M(N:_RM)\langle s\rangle \subseteq M\langle s\rangle \subseteq M(N:_RM)=N.
 \] 
 Since $M\langle s\rangle$ is a finitely generated submodule, then $N$ is $S$-finite.
  \end{proof} 
 
 \begin{corollary}  Let $S$ be an $m$-system of a ring $R$, and $M$ a multiplication finitely generated right $R$-module. If $(N:_RM)$  is an S-prime ideal of R, then M is an S-Noetherian R-module.
  \end{corollary}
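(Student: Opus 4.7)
The plan is to transport the hypothesis from ideals of $R$ over to submodules of $M$ via Corollary~\ref{SMM}(1), and then establish $S$-finiteness of each submodule by adapting the case split used in the preceding Proposition.

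First, I would apply Corollary~\ref{SMM}(1): because $M$ is a multiplication module and $(N:_R M)$ is a right $S$-prime ideal of $R$ for every submodule $N$ of $M$, every proper submodule of $M$ is itself an $S$-prime submodule, and in particular satisfies $(N:_R M)\cap S=\emptyset$. This puts every proper submodule of $M$ on the same footing as the $S$-prime submodules handled in the preceding Proposition.

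Next, I would verify $S$-finiteness submodule by submodule. If $N=M$, then $N$ is finitely generated by hypothesis and hence trivially $S$-finite. For a proper submodule $N$, the strategy is to exhibit a finitely generated ideal $J$ of $R$ and an element $s\in S$ such that $(N:_R M)\langle s\rangle\subseteq J\subseteq (N:_R M)$. Once such a $J$ is in hand, the multiplication property $N=M(N:_R M)$ yields
\[
N\langle s\rangle \;=\; M(N:_R M)\langle s\rangle \;\subseteq\; MJ \;\subseteq\; M(N:_R M) \;=\; N,
\]
and $MJ$ is finitely generated because both $M$ and $J$ are. This gives $S$-finiteness of $N$ directly from the definition.

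The main obstacle will be producing the ideal $J$, i.e.\ showing that each $(N:_R M)$ is $S$-finite as an ideal of $R$. The natural route is to prove that $R$ itself is right $S$-Noetherian, and then quote that every right ideal of $R$ is $S$-finite. To invoke Theorem~1.1 for this, I would need every right $S$-prime right ideal of $R$ to be $S$-finite; the ideals of the form $(N:_R M)$ are right $S$-prime by hypothesis, and the remaining right $S$-prime right ideals of $R$ would be handled by a Cohen-style maximality argument (exactly as in the proof of Theorem~1.1): pick a maximal non-$S$-finite right ideal $P$, show $P$ is right $S$-prime, and then reduce to the ideals that arise as $(N:_R M)$ via the correspondence between submodules of $M$ and ideals of $R$ coming from the multiplication and finite-generation hypotheses. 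The delicate part is lining up the two-sided and right-sided notions so that Theorem~1.1 applies cleanly; once it does, $(N:_R M)$ is $S$-finite, the displayed chain above produces the required $MJ$, and $M$ is $S$-Noetherian as claimed.
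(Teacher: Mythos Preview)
Your displayed chain
\[
N\langle s\rangle \;=\; M(N:_R M)\langle s\rangle \;\subseteq\; MJ \;\subseteq\; M(N:_R M) \;=\; N
\]
with $MJ$ finitely generated is exactly the paper's argument, word for word. The difference is in how one gets the finitely generated ideal $J$ with $(N:_R M)\langle s\rangle\subseteq J\subseteq (N:_R M)$. The paper does not attempt to derive this from the $S$-prime hypothesis at all: its proof opens with ``Let $N$ be a submodule of $M$, then $(N:_R M)$ is $S$-finite'' and proceeds immediately to the display above. In other words, the paper's own argument treats the hypothesis as ``$(N:_R M)$ is $S$-finite'' rather than ``$S$-prime''; the word ``$S$-prime'' in the Corollary statement is almost certainly a slip, and the paper makes no effort to bridge that gap.

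Your detour through Theorem~1.1 to manufacture $S$-finiteness from the $S$-prime hypothesis does not succeed, and it cannot. The Cohen-style maximality argument shows that a maximal non-$S$-finite right ideal $P$ is right $S$-prime; you then propose to ``reduce to the ideals that arise as $(N:_R M)$''. But identifying $P$ with some $(N:_R M)$ only tells you, via the hypothesis, that $P$ is $S$-prime --- which you already knew --- not that $P$ is $S$-finite. Nowhere in the hypotheses is there any finiteness input (no Noetherian assumption on $R$, no $S$-finiteness of any ideal), so no rearrangement of Theorem~1.1 can produce it; indeed, if every ideal of $R$ happens to be $S$-prime, Theorem~1.1 collapses to the tautology ``$R$ is right $S$-Noetherian iff every right ideal is $S$-finite''. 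The first step of your plan (pulling $S$-primeness back to submodules via Corollary~\ref{SMM}) is correct but plays no role in the paper's proof and does not help with the real obstacle.
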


 \begin{proof}  Let $N$ be a  submodule of $M$, then $(N:_RM)$ is $S$-finite, thus, there exist $s\in S$ and a finitely generated right ideal $I$ such that
 
 \[
 (N:_RM)\langle s\rangle \subseteq I \subseteq (N:_RM).
\]
 Hence, 
 \[  
 N\langle s\rangle=M(N:_RM)\langle s\rangle \subseteq MI \subseteq M(N:_RM)=N.
 \] 
Since $MI$ is a finitely generated submodule, then, $N$ is $S$-finite, and hence, $M$ is $S$-Noetherian.
  \end{proof}

Recall that since every multiplicatively closed set is an $m$-system, then every  right $S$-Noetherian ring associated with multiplicatively closed set $S$, is an $S$-Noetherian ring associated with the $m$-system $S$.

In the following we give some examples of right  $S$-Noetherian rings.

\begin{example}\label{WOOD} 	Let $R=\left[ \begin{array}{cc}\mathbb{Q}& \mathbb{Q} \\ 0 & \mathbb{Z}\end{array} \right]$, since Z is a Noetherian domain which does not equal to its fractions field, then by Corollary 1.23 of \cite{L}, R is not right Noetherian. However, by considering the $m$-system 

$S=\bigg\{\left[ \begin{array}{cc}2^n& 0 \\ 0 & 0\end{array} \right], n\in \mathbb{N}\cup\{0\}\bigg\}$, and $s=\left[ \begin{array}{cc}1& 0 \\ 0 & 0\end{array} \right]$, any right ideal $I$ of $R$ is $S$-finite, because for the right ideal $I$, by Proposition 1.17 of \cite{L}, we have the followings forms:

If $I=\left[\begin{array}{cc}0& 0 \\ 0 & J\end{array} \right]$, for some ideal $J$ of $\mathbb{Z}$, then, 
 $I\langle s \rangle\subseteq\left[ \begin{array}{cc}0& 0 \\ 0 & 0\end{array} \right]\subseteq I$. 
 
 If $I=\left[ \begin{array}{cc}0& K \\ 0 & J\end{array} \right]$, where $J$ is an ideal of $\mathbb{Z}$, and  $K$ is a right $Z$-submodule of the right $Z$-module Q, then, 
 $I\langle s\rangle\subseteq\left[ \begin{array}{cc}0& 0 \\ 0 & 0\end{array} \right]\subseteq I$.

 If $I=\left[ \begin{array}{cc}\mathbb{Q}& \mathbb{Q} \\ 0 & J\end{array} \right]$, where $J$ is an ideal of $\mathbb{Z}$, then, 
 \[
 I\langle s\rangle\subseteq\left[ \begin{array}{cc}\mathbb{Q}& 0 \\ 0 & 0\end{array} \right]\subseteq\left[ \begin{array}{cc}\mathbb{Q}& \mathbb{Q} \\ 0 & 0\end{array} \right]=\left[ \begin{array}{cc}1& 0 \\ 0 & 0\end{array} \right]R\subseteq I. 
 \]

 Thus, $R$ is  right $S$-Noetherian.
\end{example}

\begin{example} 	Let $R=\left[ \begin{array}{cc}\mathbb{Q}& \mathbb{Q} \\ 0 & \mathbb{Z}\end{array} \right]$. Then, consider the ring

\[
 T_2(R)=\left[\begin{array}{cc}R& R \\ 0 & R\end{array} \right].
 \]

Take
 \[
 S_{T_2(R)}=\Bigg\{\left[ \begin{array}{cc}s& 0\\ 0 & s\end{array} \right]; s\in S \Bigg\}, 
 \]
 where $S$  is the $m$-system defined in Example \ref{WOOD}.
 Then, $S_{T_2(R)}$ is an $m$-system of the ring $T_2(R)$ ($T_2(R)$ is the upper triangular matrices ring over $R$).  The ring $T_2(R)$ is a right $S_{T_2(R)}$-Noetherian, by Theorem 3.9 of \cite{A},
\end{example}

\subsection*{Conflict of interest}

The author declares no conflicts of interest.

\subsection*{Acknowledgment}
I would like to extend my thanks to professor {\c S}ehmus F{\i}nd{\i}k for his useful comments, I also extend my gratitude to professor Jongwook Baeck for his help in Example \ref{WOOD}.

\end{document}